\DeclareMathOperator{\loc}{loc}
\DeclareMathOperator{\osc}{osc}
\DeclareMathOperator{\tr}{tr}
\newcommand{\norm}[1]{\left\lVert#1\right\rVert}
\theoremstyle{plain}
\newtheorem{thm}{Theorem}[section]
\newtheorem{lem}{Lemma}[section]
\newtheorem{cor}{Corollary}[section]
\theoremstyle{definition}
\newtheorem{defn}{Definition}[section]
\theoremstyle{remark}
\let\orgdescriptionlabel\descriptionlabel
\renewcommand*{\descriptionlabel}[1]{%
  \let\orglabel\label
  \let\label\@gobble
  \phantomsection
  \edef\@currentlabel{#1\unskip}%
  \let\label\orglabel
  \orgdescriptionlabel{#1}%
}
\numberwithin{equation}{section}
\begin{document}
	
\title[$W^{2,\delta}$ type estimates for degenerate fully nonlinear elliptic equations]{Interior $W^{2,\delta}$ type estimates for degenerate fully nonlinear elliptic equations with $L^n$ data }
%%%%% AUTHORS %%%%%

\author[Byun]{Sun-Sig Byun}
\address{Department of Mathematical Sciences and Research Institute of Mathematics,
	Seoul National University, Seoul 08826, Republic of Korea}
\email{byun@snu.ac.kr}

\author[Kim]{Hongsoo Kim}
\address{Department of Mathematical Sciences, Seoul National University, Seoul 08826, Republic of Korea}
\email{rlaghdtn98@snu.ac.kr}

\author[Oh]{Jehan Oh}
\address{Department of Mathematics, Kyungpook National University,
Daegu 41566, Republic of Korea}
\email{jehan.oh@knu.ac.kr}

\thanks {S.-S. Byun was supported by the National Research Foundation of Korea(NRF) grant funded by the Korea government [Grant No. 2022R1A2C1009312]. H. Kim was supported by the National Research Foundation of Korea(NRF) grant funded by the Korea government [Grant No. 2021R1A4A1027378]. J. Oh was supported by the National Research Foundation of Korea(NRF) grant funded by the Korea government [Grant Nos. 2020R1C1C1A01014904 and RS-2023-00217116].}

\makeatletter
\@namedef{subjclassname@2020}{\textup{2020} Mathematics Subject Classification}
\makeatother
% \date{\today}
% It is required to enter 2010 MSC.
\subjclass[2020]{35B65, 35D40, 35J60, 35J70}
% Please provide minimum  5 keywords.
\keywords{Fully nonlinear elliptic equation, interior $W^{2,\delta}$ estimates, degenerate elliptic equations}

\everymath{\displaystyle}

\begin{abstract}
	We establish interior $W^{2,\delta}$ type estimates for a class of degenerate fully nonlinear elliptic equations with $L^n$ data. The main idea of our approach is to slide $C^{1,\alpha}$ cones, instead of paraboloids, vertically to touch the solution, and estimate the contact set in terms of the measure of the vertex set. This shows that the solution has tangent $C^{1,\alpha}$ cones almost everywhere, which leads to the desired Hessian estimates. Accordingly, we are able to develop a kind of counterpart to the estimates for divergent structure quasilinear elliptic problems, as discussed in \cite{Cianchi18,Mingione18}.
\end{abstract}

\maketitle

\section{Introduction}
\label{sec1}
In this paper, we study interior $W^{2,\delta}$ type estimates for a viscosity solution $u$ of degenerate fully nonlinear elliptic inequalities
\begin{align} \label{PDE}
|Du|^{\gamma} \mathcal{P}_{\lambda,\Lambda}^{-}(D^{2}u) \leq f(x) \leq |Du|^{\gamma} \mathcal{P}_{\lambda,\Lambda}^{+}(D^{2}u)  \quad\text{in}\ B_{1},
\end{align}
where $\gamma \geq 0 $, $B_1 \subset \mathbb{R}^n$ with $n\geq 2$ is the unit ball, $\mathcal{P}_{\lambda,\Lambda}^{\pm}$ are the Pucci extremal operators with ellipticity constants $0 < \lambda \leq \Lambda < \infty$, and $f \in C \cap L^n(B_1)$.

The inequalities \eqref{PDE} include the following important types of equations:
\begin{enumerate}
    \item The degenerate fully nonlinear elliptic equation
\begin{align}
\label{DEQ}
    |Du|^\gamma F(D^2u) = f \quad\text{in}\ B_{1},
\end{align}
where $\gamma \geq 0$, $F$ is uniformly elliptic, and $f \in C \cap L^n(B_1)$.
    \item The degenerate $p$-Laplacian equation
\begin{align}
\label{pLap}
    \Delta_p u = \mathrm{div}(|Du|^{p-2}Du) = f \quad\text{in}\ B_{1},
\end{align}
where $p = \gamma +2 \geq 2$ and $f \in C \cap L^n(B_1)$.
\end{enumerate}

There has been lots of progress in the regularity of these types of equations in recent years.
For \eqref{DEQ}, Imbert and Silvestre \cite{Imbert13} established interior $C^{1,\alpha}$ estimates with $f \in L^\infty$ and Araújo, Ricarte, and Teixeira \cite{Araujo15} proved optimal $C^{1,\alpha}$ estimates with $\alpha = \frac{1}{1+\gamma}$ when $F$ is concave.
For more results regarding \eqref{DEQ}, we refer the reader to \cite{Andrade22,Yun24,Li232,Pimentel24,Silva21,Araujo23,Banerjee22,Davila09,Imbert11,Urbano23}, and for more results related to nonstandard degeneracy like variable exponent degeneracy or double phase degeneracy, we refer to \cite{Baasandorj22,Baasandorj24,Baasandorj23,Teixeira20,Jesus22,Fang21,Filippis22,Filippis21} and the references therein.

For the uniformly elliptic case that $\gamma =0$, Lin \cite{Lin86} first proved interior $W^{2,\delta}$ estimates for linear elliptic nondivergent equations, and Caffarelli and Cabr\'e \cite{Caffarelli95} obtained interior $W^{2,\delta}$ estimates for viscosity solutions.
These $W^{2,\delta}$ estimates turned out to be helpful for deriving interior $W^{2,p}$ estimates when $p>n$, $f \in L^p$ and the homogeneous equation $F(D^2h) =0$ has $C^{1,1}$ estimates.
Moreover, they are also used to prove partial regularity in \cite{Armstrong12,Daniel15} and convergence of blow down solutions, see \cite{Yuan01}.
For the singular case that $-1 < \gamma \leq 0$, D. Li and Z. Li \cite{Li18, Li17} proved $W^{2,\delta}$ estimates by using a sliding paraboloid method.
For the treatment of a more general case, we refer the reader to \cite{Baasandorj242,li23}.
However, for the degenerate case $\gamma >0$, it is not easy to prove $W^{2,\delta}$ estimates because the equation degenerates and the Hessian $D^2u$ can blow up when the gradient $Du$ is very small.
To the best of our knowledge, there are no results about the integrability of the Hessian of solutions of degenerate fully nonlinear elliptic equations.

On the other hand, for the degenerate $p$-Laplacian \eqref{pLap} which is in divergence form, Lou \cite{Lou08} proved that a weak solution $u$ satisfies
    $$|Du|^{p-2}Du \in W^{1,2}_{\loc}$$
when $f \in L^s$ with $s > \max\{2,n/p\}$.
For a further discussion of these kinds of regularity, we refer the reader to \cite{Canino18,Cianchi18,Mingione18}.
Since the approach in \cite{Lou08} can be used only for the divergence form as far as we are concerned, we cannot directly apply the same method to viscosity solutions in nondivergence form. Here we use a sliding $C^{1,\alpha}$ cone method to parallel the nondivergent counterpart of the result in \cite{Lou08}. More precisely, we prove that any viscosity solution $u$ of \eqref{PDE} satisfies
    $$|Du|^{\gamma}Du \in W^{1,\delta}_{\loc}$$
for some universal constant $\delta>0$ for a given $f \in L^n$.
Note that if $\gamma=0$, then this is the same as $W^{2,\delta}$ estimate for the uniformly elliptic case as in \cite{Lin86} and \cite{Caffarelli95}.

The main result of this paper is the following:
\begin{thm} \label{main}
Let $u \in C(\overline{B}_{1})$ satisfy
\begin{align*}
	|Du|^{\gamma} \mathcal{P}_{\lambda,\Lambda}^{-}(D^{2}u) \leq f(x) \leq |Du|^{\gamma} \mathcal{P}_{\lambda,\Lambda}^{+}(D^{2}u) \quad\text{in}\ B_{1}
\end{align*}
in the viscosity sense, with $0 < \lambda \leq \Lambda < \infty$, $\gamma \geq 0$ and $f \in C \cap L^{n}(B_{1})$. Then $|Du|^\gamma Du \in W^{1,\delta}(B_{1/2})$ with the estimate
    $$\norm{|Du|^{\gamma}Du}_{W^{1, \delta}(B_{1/2})} \leq C \left( \norm{u}_{L^{\infty}(B_{1})}^{1+\gamma} + \norm{f}_{L^{n}(B_{1})} \right),$$
where $\delta>0$ and $C>0$ depend only on $n, \lambda, \Lambda$ and $\gamma$.
\end{thm}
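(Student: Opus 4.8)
The proof strategy, as the abstract indicates, is to replace the classical Alexandrov–Bakelman–Pucci (ABP) argument with a variant in which we slide $C^{1,\alpha}$ cones, rather than paraboloids, vertically from below to touch the graph of $u$. Fix $\alpha = \tfrac{1}{1+\gamma} \in (0,1]$. For a vertex $y$ in a small ball and a slope parameter, consider the cone $C_{y}(x) = a - M|x-y|^{1+\alpha}$ (equivalently $-M|x-y|^{1+\alpha} + \ell$), and slide it up until it first touches $u$ from below. Let $\Gamma_M$ denote the set of contact points as the vertex $y$ ranges over, say, $B_{1/4}$, and let $A_M$ be the corresponding set of vertices. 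The first main step is a \emph{measure-covering estimate}: one shows $|A_M| \le C\,|\Gamma_M| + (\text{error involving } \|f\|_{L^n})$, i.e. the set of vertices for which the touching cone has slope comparable to $M$ is controlled by the measure of where the solution is actually touched. This is the nondivergent, cone-based analogue of the weak-Harnack/ABP measure estimate, and it rests on the PDE \eqref{PDE}: at a contact point $x_0$ the solution is touched from below by a $C^{1,\alpha}$ cone, so $D^2 u(x_0)$ (in the viscosity sense, from below) is bounded above in an appropriate weighted way, and since $|Du|$ near the vertex of the cone is of order $|x-y|^{\alpha}$, the degenerate factor $|Du|^\gamma$ exactly compensates the $|x-y|^{\alpha-1}$ blow-up of the Hessian of the cone — this is why the exponent $\alpha = \frac{1}{1+\gamma}$ is the natural one, and it is the reason Imbert–Silvestre's $C^{1,\alpha}$ estimates are available with this exponent.

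\emph{Second step: pointwise tangency a.e.} Iterating the measure estimate over a geometric sequence of slopes $M = M_0 2^k$ and summing, one obtains that the set of points $x \in B_{1/4}$ for which $u$ has \emph{no} tangent $C^{1,\alpha}$ cone from below with slope $\lesssim M$ has measure decaying like $M^{-\delta}$ for some universal $\delta>0$; combined with the symmetric argument applied to $-u$ (using that $-u$ satisfies the same class of inequalities by the structure of the Pucci operators), this shows that for a.e. $x$ there is a $C^{1,\alpha}$ cone touching $u$ from above \emph{and} from below with comparable opening, and moreover the distribution function of the minimal such slope is in weak-$L^\delta$. At such a point $x$ the graph of $u$ is pinched between two $C^{1,\alpha}$ cones with a common vertex, which yields a second-order-type expansion: $u$ is twice differentiable in the sense that $|Du|^\gamma Du$ has a pointwise derivative (more precisely, one extracts that $|x-y|\mapsto$ the relevant difference quotient of $|Du|^\gamma Du$ is controlled). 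This is the place where one must be careful: a $C^{1,\alpha}$ pinch does not immediately give pointwise twice-differentiability of $u$ itself when $\gamma>0$, but it does give differentiability of the \emph{nonlinear gradient} $|Du|^\gamma Du$, which is all that is claimed.

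\emph{Third step: from pointwise to Sobolev.} Having established that $V := |Du|^\gamma Du$ has, at a.e. point, a bounded "difference-quotient slope" $\theta(x)$ with $|\{\theta > t\}| \le C t^{-\delta}$, one invokes a characterization of $W^{1,\delta}$ à la the Calderón–Zygmund / Hajłasz-type criterion: if $V \in L^\delta$ and there is $g \in L^\delta$ with $|V(x)-V(z)| \le |x-z|\,(g(x)+g(z))$ for a.e. $x,z$, then $V \in W^{1,\delta}_{\loc}$ with $\|DV\|_{L^\delta} \lesssim \|g\|_{L^\delta}$; here $g$ is built from the maximal slopes produced in step two, and its weak-$L^\delta$ bound is upgraded to a genuine $L^{\delta'}$ bound by slightly decreasing $\delta$. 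Finally one tracks the dependence of all constants: rescaling $u \mapsto u/(\|u\|_{L^\infty(B_1)} + \|f\|_{L^n(B_1)})$ reduces to the normalized case, the measure estimates are scale-invariant under the natural parabolic-type scaling adapted to the homogeneity $|Du|^\gamma D^2u$, and a covering argument passes from $B_{1/4}$ to $B_{1/2}$, producing the stated estimate with $C, \delta$ depending only on $n,\lambda,\Lambda,\gamma$.

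\emph{Main obstacle.} I expect the crux to be the measure-covering estimate in the first step — specifically, showing that the contact set $\Gamma_M$ has measure bounded below by (a constant times) the vertex-set measure $|A_M|$, up to the $L^n$-data error. In the classical paraboloid setting this is the area formula plus the ABP estimate applied on small balls; here the degeneracy means the "Jacobian" of the vertex-to-contact map involves $|Du|^\gamma$, which vanishes precisely where the estimate is most delicate, so one must exploit that near the cone vertex $Du$ is pinned to size $|x-y|^\alpha$ and feed this into a localized ABP inequality for \eqref{PDE} (the degenerate ABP of Imbert–Silvestre type). Making this quantitative and uniform in the slope $M$ — so that the iteration in step two closes with a universal $\delta$ — is the technical heart of the argument.
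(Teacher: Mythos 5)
Your Steps 1 and 2 follow the same route as the paper: slide $C^{1,\alpha}$ cones with $\alpha=\tfrac{1}{1+\gamma}$, bound the vertex-set measure by the contact-set measure via the area formula applied to the vertex map $y=x+M^{-(1+\gamma)}|Du|^{\gamma}Du(x)$ (whose Jacobian is controlled by the equation precisely because $|Du|^{\gamma}$ cancels the $|x-y|^{\alpha-1}$ blow-up of the cone's Hessian), and iterate with a Vitali-type covering lemma to get the power decay $|B_1\setminus T_t|\le Ct^{-\sigma}$. That part of the plan is sound and is the technical heart of the paper as well.

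The genuine gap is in your Step 3. You invoke a Haj\l asz-type criterion: if $V\in L^{\delta}$ and $|V(x)-V(z)|\le|x-z|\bigl(g(x)+g(z)\bigr)$ a.e.\ with $g\in L^{\delta}$, then $V\in W^{1,\delta}_{\loc}$. This equivalence is a theorem only for exponents $p>1$ (and, with care, $p=1$); here $\delta$ is a small universal constant in the tradition of Lin and Caffarelli--Cabr\'e and cannot be assumed to be $\ge 1$. For $p<1$ the implication is false: the Cantor function satisfies such a pointwise inequality with $g\in L^{p}$ for every $p<1$, yet its distributional derivative is a singular measure, so it is not in $W^{1,p}$. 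Thus a.e.\ pointwise control of difference quotients of $V=|Du|^{\gamma}Du$ does not, by itself, show that the distributional derivative of $V$ is a function in $L^{\delta}$. The paper closes this gap differently: it regularizes $u$ by sup-/inf-convolution so that $u^{\epsilon}$ is semiconvex/semiconcave (hence twice differentiable a.e.\ and with $Du^{\epsilon}$ Lipschitz on the contact sets), derives the two-sided matrix bound $-\tilde g^{\epsilon}I\le D(|Du^{\epsilon}|^{\gamma}Du^{\epsilon})_{\mathrm{sym}}\le\tilde g^{\epsilon}I$ a.e.\ with $\|\tilde g^{\epsilon}\|_{L^{\delta}}$ uniformly bounded, integrates by parts against nonnegative test functions, and passes to the limit $\epsilon\to0$; the resulting weak two-sided bound by an integrable majorant forces the distributional derivative to be an absolutely continuous (function-valued) object dominated by $\tilde g\in L^{\delta}$. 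You need this (or an equivalent mechanism) to legitimately conclude membership in $W^{1,\delta}$. A further, smaller point: the normalization $u\mapsto u/(\|u\|_{L^{\infty}}+\|f\|_{L^{n}})$ does not respect the $(1+\gamma)$-homogeneity of the operator (scaling $u$ by $a$ scales $f$ by $a^{1+\gamma}$); one must take $a\sim\|u\|_{L^{\infty}}+\|f\|_{L^{n}}^{1/(1+\gamma)}$, which is also why the final estimate carries $\|u\|_{L^{\infty}}^{1+\gamma}+\|f\|_{L^{n}}$ rather than the first power of $\|u\|_{L^{\infty}}$.
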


Our proof is influenced by the proof of  $W^{2,\delta}$ estimates for the singular case by D. Li and Z. Li \cite{Li18}, which uses localization, a covering argument, and a sliding paraboloid method.
Roughly speaking, the main idea of sliding paraboloid method is sliding paraboloids from below and above in order to touch the solution, thereby finding the lower bound of the measure of the set of touching points in terms of the measure of the set of vertex points using the area formula.
This method was first introduced by Cabr\'e \cite{Cabre97} and developed by Savin \cite{Savin07}.
For more results regarding this method, we refer the reader to \cite{Imbert16,Mooney15,Pimentel22} and \cite{Colombo14}.
However, it seems that a paraboloid does not fit well with degenerate inequalities like \eqref{PDE} because of the following reasons.
First, the scaling of a paraboloid does not match well with the scaling of degenerate inequalities. 
Second, the Hessian of a paraboloid is a constant no matter what the gradient is, while the Hessian of a solution can blow up when the gradient is small and so the equation degenerates.
Finally, there is a difficulty in using a paraboloid along with the area formula to estimate the measure of the set of touching points in terms of the measure of the set of vertex points due to the degeneracy of the inequalities.

The main originality of this paper is to slide a $C^{1,\alpha}$ cone $P(x)=\frac{1}{1+\alpha}|x|^{1+\alpha}$ where $\alpha = \frac{1}{1+\gamma}$, instead of a paraboloid $\frac{1}{2}|x|^2$, which enables us to solve the difficulty we mentioned above.  
We remark first that the choice of $\alpha$ matches with the scaling of a solution.
In the degenerate case, it is natural to consider the scaling $\tilde{u}(x) = \frac{u(rx)}{r^{1+\alpha}}$ with $0<r<1$, since $\tilde{u}$ satisfies the same inequalities \eqref{PDE} with $f(x)$ replaced by $\tilde{f}(x) = f(rx)$.
Notice that the same scaling of the $C^{1,\alpha}$ cone $\tilde{P}(x) = \frac{P(rx)}{r^{1+\alpha}}$ is the same as the original $C^{1,\alpha}$ cone $P(x)$.
Next, observe that $P$ satisfies $|DP|^\gamma \Delta P = c$ for some constant $c$, therefore the Hessian of $P$ blows up when the gradient of $P$ is small.
Finally, $C^{1,\alpha}$ cones fit exactly with the degeneracy of the equation when using the area formula.
If we slide a concave $C^{1,\alpha}$ cone $P(x) = -\frac{1}{1+\alpha}|x-y_0|^{1+\alpha}$ with a vertex $y_0$, so that $P$ touches a $C^2$ function $u$ from below at a touching point $x_0$, then we have the equation
$$y_0 = x_0 + |Du|^\gamma Du(x_0).$$
Differentiating the equation with respect to $x_0$ and taking a trace, we find
$$\tr D_{x_0}y_0 = n + \mathrm{div}(|Du|^\gamma Du)(x_0) = n + \Delta_{\gamma+2} u (x_0).$$
Heuristically, if we assume that $u$ is a classical solution of \eqref{pLap}, we say that $\Delta_{\gamma+2} u (x_0) \leq f(x_0)$.
By using a bound of $\tr D_{x_0}y_0$, we can use the area formula to obtain a lower bound of the measure of the set of touching points.
Note that since $P$ mentioned above satisfies $D(|DP|^\gamma DP) = cI$ where $I$ is the identity matrix, it is possible to bound $D(|Du|^\gamma Du)$ at the point where $C^{1,\alpha}$ cones touches $u$ from below and above.

However, unlike paraboloids, $C^{1,\alpha}$ cones cause some difficulties.
First, the growth and degeneracy of a $C^{1,\alpha}$ cone is different from that of a paraboloid. The Hessian of a $C^{1,\alpha}$ cone blows up at the vertex point and diminishes at a point far from the vertex point, which makes us delicately consider the distance between the vertex point and the touching point.
Therefore, in the proof of the key lemma, Lemma \ref{Key}, we will divide it into three cases by the distance.
Second, the sum and difference of two $C^{1,\alpha}$ cones are not a $C^{1,\alpha}$ cone, and the sum of a $C^{1,\alpha}$ cone and a linear function is not a $C^{1,\alpha}$ cone, either.
This disadvantage makes it difficult to analyze the difference of $C^{1,\alpha}$ cones in Step 2 of the key lemma. Thus, we will instead consider the Hessian of its difference and approximate it by some function which is relatively easier to analyze.

For any point $x_0 \in B_{1/2}$, we consider the pointwise $C^{1,\alpha}$ seminorm at $x_0$ defined by
\begin{align*}
    [u]_{C^{1,\alpha}}(x_{0}) := \sup_{r>0} \inf_{l \in \mathcal{P}_{1}} \frac{\norm{u-l}_{L^{\infty}(B_{r}(x_{0}))}}{r^{1+\alpha}}.
\end{align*}
Note that if $\norm{[u]_{C^{1,\alpha}}(\cdot)}_{L^{\infty}(B_{1/2})} \leq c$, then $u \in C^{1,\alpha}(B_{1/2})$ with $\norm{u}_{C^{1,\alpha}} \leq c$.
If there exist $C^{1,\alpha}$ cones of the opening $K$ touching $u$ at $x_0$ from below and above, then  $[u]_{C^{1,\alpha}}(x_{0}) \leq cK$ for some constant $c=c(n,\alpha)$.
Thus, if we prove the power decay of the measure of the set of points which do not have touching $C^{1,\alpha}$ cones, then we have the following corollary.

\begin{cor}
    Under the assumption of Theorem \ref{main}, the point-wise $C^{1,\alpha}$ norm of $u$ is integrable, in the sense that $[u]_{C^{1,\alpha}}(\cdot) \in L^{\delta}(B_1)$ where $\alpha = \frac{1}{1+\gamma}$.\\ In particular, $[u]_{C^{1,\alpha}}(\cdot)$ is almost everywhere finite, which implies that $u$ is almost everywhere $C^{1,\alpha}$.
\end{cor}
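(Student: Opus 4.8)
The corollary will follow directly from the measure estimate on the ``bad set'' that already underlies the proof of Theorem \ref{main}, so the plan is essentially bookkeeping layered on top of Lemma \ref{Key}. After the standard normalization ($\norm{u}_{L^{\infty}(B_1)}\le 1$ and $\norm{f}_{L^{n}(B_1)}\le\varepsilon_0$ for a small universal $\varepsilon_0$), the proof of Theorem \ref{main} produces, for the set
\begin{align*}
\mathcal{A}_M := \big\{ x_0 \in B_{1/2} : u \text{ is touched at } x_0 \text{ from below and from above by } C^{1,\alpha} \text{ cones of opening } \le M \big\},
\end{align*}
a power decay of the complement: there are universal $C_0>0$ and $\delta>0$ with $\big|B_{1/2}\setminus\mathcal{A}_M\big|\le C_0 M^{-\delta}$ for every $M\ge 1$. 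This is exactly the estimate feeding the Hessian integrability, obtained from Lemma \ref{Key} by a dyadic iteration, so it may be taken as given.

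First I would feed this into the elementary pointwise inequality recorded just before the statement: if $x_0\in\mathcal{A}_M$ then $[u]_{C^{1,\alpha}}(x_0)\le c_1 M$ with $c_1=c_1(n,\alpha)$. Hence $\{x_0\in B_{1/2}: [u]_{C^{1,\alpha}}(x_0)>c_1 M\}\subseteq B_{1/2}\setminus\mathcal{A}_M$, which yields the distributional bound $\big|\{x_0\in B_{1/2}: [u]_{C^{1,\alpha}}(x_0)>t\}\big|\le C_1 t^{-\delta}$ for all $t\ge c_1$; that is, $[u]_{C^{1,\alpha}}(\cdot)$ lies in weak-$L^{\delta}(B_{1/2})$. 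The layer-cake formula then gives, for every $0<\sigma<\delta$,
\begin{align*}
\int_{B_{1/2}}[u]_{C^{1,\alpha}}^{\sigma}\,dx \;=\; \sigma\int_0^{\infty} t^{\sigma-1}\big|\{[u]_{C^{1,\alpha}}>t\}\big|\,dt \;\le\; c_1^{\sigma}\,|B_{1/2}| \;+\; \sigma C_1\!\int_{c_1}^{\infty} t^{\sigma-1-\delta}\,dt \;<\;\infty .
\end{align*}
Since the exponent $\delta$ in Theorem \ref{main} is merely some universal constant, not claimed sharp, I would simply relabel $\sigma$ as $\delta$, obtaining $[u]_{C^{1,\alpha}}(\cdot)\in L^{\delta}(B_{1/2})$.

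It remains to undo the normalization and localize. Replacing $u$ by $u/A$ with $A\simeq\norm{u}_{L^{\infty}(B_1)}+\norm{f}_{L^{n}(B_1)}^{1/(1+\gamma)}$ preserves the class of inequalities \eqref{PDE} and scales every $C^{1,\alpha}$ cone opening --- hence $[u]_{C^{1,\alpha}}(\cdot)$ --- by $A^{-1}$, so the previous step upgrades to $\norm{[u]_{C^{1,\alpha}}(\cdot)}_{L^{\delta}(B_{1/2})}\le C\big(\norm{u}_{L^{\infty}(B_1)}+\norm{f}_{L^{n}(B_1)}^{1/(1+\gamma)}\big)$ with $C=C(n,\lambda,\Lambda,\gamma)$. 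Applying this on each ball $B_{1/2}(z)\Subset B_1$ through the affine change $x\mapsto z+\tfrac12 x$ (which again preserves \eqref{PDE}) and covering, one gets $[u]_{C^{1,\alpha}}(\cdot)\in L^{\delta}$ on every compact subset of $B_1$; in particular it is almost everywhere finite there. Finally, by the remark preceding the statement, $[u]_{C^{1,\alpha}}(x_0)<\infty$ means $u$ admits at $x_0$ a first-order Taylor polynomial with $C^{1,\alpha}$-type remainder, i.e.\ $u$ is $C^{1,\alpha}$ at $x_0$, so $u$ is $C^{1,\alpha}$ almost everywhere.

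The only genuinely substantive input is the power decay $|B_{1/2}\setminus\mathcal{A}_M|\le C_0 M^{-\delta}$, which is the content of Lemma \ref{Key} and its iteration and is already in hand; within the corollary itself the single mild point to watch is the definition of $[u]_{C^{1,\alpha}}(x_0)$ near $\partial B_1$, where the supremum over $r>0$ should be read with $B_r(x_0)$ intersected with $B_1$, after which the covering argument on compact subsets goes through unchanged.
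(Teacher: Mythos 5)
Your proposal is correct and follows essentially the same route the paper intends: combine the power decay $|B_1\setminus T_t|\le Ct^{-\sigma}$ from Theorem \ref{tangent} (equivalently, the iteration of Lemma \ref{Key}) with the pointwise bound $[u]_{C^{1,\alpha}}(x_0)\le cK$ for $x_0\in T_K$, then convert the resulting weak-type decay into an $L^{\delta}$ bound and rescale. The paper leaves this corollary without a written proof precisely because it is this bookkeeping, which you have carried out correctly.
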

The authors of \cite{Araujo15} proved that if $u$ is a solution of \eqref{DEQ} and $F$ is concave, then $[u]_{C^{1,\alpha}}(\cdot) \in L^{\infty}$ whose norm depends on $L^\infty$ data of $f$.
Moreover, the authors of \cite{Urbano17} obtained that if $u$ is a solution of \eqref{pLap} in the plane, then $[u]_{C^{1,\alpha}}(\cdot) \in L^{\infty}$ whose norm depends on $L^\infty$ data of $f$.
However, this corollary implies that if $u$ is a solution of \eqref{PDE}, we have  $[u]_{C^{1,\alpha}}(\cdot) \in L^{\delta}$ whose norm depends on $L^n$ data of $f$.
Especially, even for the singular set $\{Du=0\}$ where the equation degenerates, we know that almost every singular point has a tangent $C^{1,\alpha}$ cone and that the point-wise $C^{1,\alpha}$ norm is finite.

The paper is organized as follows.
In Section \ref{sec2} we specify the notations, introduce some preliminary lemmas with a covering lemma, and reduce Theorem \ref{main} to Lemma \ref{Smallness} by normalization.
In Section \ref{sec3} we consider the relation of $|Du|^\gamma Du$ with a touching $C^{1,\alpha}$ cone and reduce Lemma \ref{Smallness} to Theorem \ref{tangent}.
In Section \ref{sec4} we prove the key density lemma and the measure decay lemma to prove Theorem \ref{tangent}. 

\section{Preliminaries}
\label{sec2}
Throughout the paper, we denote by $B_r(x_0) = \{ x \in \mathbb{R}^n : |x-x_0| <r \}$ and $B_r = B_r(0)$. $S(n)$ denotes the space of symmetric $n \times n$ real matrices and $I$ denotes the identity matrix.

For given two functions $u, v : \Omega \rightarrow \mathbb{R}$ and a point $x_0 \in \Omega$, we denote by $u \overset{x_0}{\leq} v$ in $\Omega$ if $u(x) \leq v(x) $ in $\Omega$ and $u(x_0) = v(x_0)$. 

For $0<\alpha\leq 1$, we say that $P$ is a concave $C^{1,\alpha}$ cone of opening $K>0$ and vertex $y_{0}$ if
$$P(x)=-\frac{K}{1+\alpha}|x-y_{0}|^{1+\alpha} + C$$
for some constant $C$.

Now we give the definition of the set of touching points. For a given continuous function $u : \Omega \rightarrow \mathbb{R} $ and a closed set $V \subset \mathbb{R}^{n}$, we slide a concave $C^{1,\alpha}$ cone of opening $K>0$ and vertex $y_{0} \in V$ from below the graph of $u$ until it touches the graph of $u$ first time.
Then we define $T_{K}^{-}(u,V)$ to be the set of touching points.
We also define $T_{K}^{+}(u,V)$ to be the set of points where $u$ touches with a convex $C^{1,\alpha}$ cone with vertex included in $V$ from above.
\begin{defn}
For a given continuous function $u : \Omega \rightarrow \mathbb{R} $, a closed set $V \subset \mathbb{R}^{n}$ and $K>0$,
\begin{align*}
    T_{K}^{-}(u,V) :=& \left\{x_{0} \in  \Omega : \exists P\text{ concave  $C^{1,\alpha}$ cone of opening $K$ whose vertex is in $V$ such that $P(x) \overset{x_0}{\leq} u(x)$ in $\Omega$}\right\}\\
    =& \left\{x_{0} \in  \Omega : \exists y_{0} \in V \text{ such that } -\frac{K}{1+\alpha}|x-y_{0}|^{1+\alpha} + \left(u(x_0) + \frac{K}{1+\alpha}|x_0-y_{0}|^{1+\alpha}\right) \overset{x_0}{\leq} u(x) \text{ in } \Omega \right\},
\end{align*}  
\begin{gather*}
    T_{K}^{+}(u,V) :=T_{K}^{-}(-u,V),\\
    T_{K}(u,V) :=T_{K}^{-}(u,V) \cap T_{K}^{+}(u,V).
\end{gather*} 
\end{defn}
We shorten $T_{K}^{\pm} = T_{K}^{\pm}(u,\overline{\Omega})$ when they are clear in the context.
Observe that $T_{K}^{\pm}$ are closed in $\Omega$ and that if $x_{0} \in T_{K}$, then $[u]_{C^{1,\alpha}}(x_{0}) \leq cK$ for some $c=c(\alpha)$.    

For given $0<\lambda \leq \Lambda$, we recall the definition of the Pucci extremal operators $\mathcal{P}_{\lambda,\Lambda}^{\pm} : S(n) \rightarrow \mathbb{R}$ as follows (see \cite{Caffarelli95}):
\begin{align*}
    \mathcal{P}_{\lambda,\Lambda}^{+}(M) := \lambda \sum_{e_i(M)<0}e_i(M) + \Lambda \sum_{e_i(M)>0}e_i(M), \\
    \mathcal{P}_{\lambda,\Lambda}^{-}(M) := \Lambda \sum_{e_i(M)<0}e_i(M) + \lambda \sum_{e_i(M)>0}e_i(M),
\end{align*}
where $M \in S(n)$ and $e_i(M)$'s are the eigenvalues of $M$.

We remark the following properties of the Pucci extremal operators:
\begin{enumerate}
    \item For every $M,N \in S(n)$, $\mathcal{P}^{-}(M) + \mathcal{P}^{-}(N) \leq \mathcal{P}^{-}(M+N) \leq \mathcal{P}^{-}(M) + \mathcal{P}^{+}(N)$,
    \item If $M\geq 0$, then $\mathcal{P}^{-}(M) = \lambda \tr M$ and $\mathcal{P}^{+}(M) = \Lambda \tr M$.
\end{enumerate}

Now we recall the definition of the inequalities \eqref{PDE} in the viscosity sense from \cite{Ishii92, Caffarelli95} as follows.
\begin{defn}
Let $f \in C(B_1)$.
We say that $u \in C(\overline{B}_1)$ satisfies 
$$|Du|^{\gamma} \mathcal{P}_{\lambda,\Lambda}^{-}(D^{2}u) \leq f(x) \quad\text{in}\ B_{1}$$
in the viscosity sense, if for any $x_0 \in B_1$ and test function $\psi \in C^2(B_1)$ such that $u-\psi$ has a local minimum at $x_0$, then
$$|D\psi(x_0)|^{\gamma} \mathcal{P}_{\lambda,\Lambda}^{-}(D^{2}\psi(x_0)) \leq f(x_0).$$
Similarly, we say that $u \in C(\overline{B}_1)$ satisfies 
$$|Du|^{\gamma} \mathcal{P}_{\lambda,\Lambda}^{+}(D^{2}u) \geq f(x) \quad\text{in}\ B_{1}$$
in the viscosity sense, if for any $x_0 \in B_1$ and test function $\psi \in C^2(B_1)$ such that $u-\psi$ has a local maximum at $x_0$, then
$$|D\psi(x_0)|^{\gamma} \mathcal{P}_{\lambda,\Lambda}^{+}(D^{2}\psi(x_0)) \geq f(x_0).$$    
\end{defn}

For $g \in L^1(\Omega)$, we define the Hardy-Littlewood maximal function of $g$ as follows:
\begin{align*}
    \mathcal{M}(g)(x) := \sup_{r>0} \frac{1}{|B_r|} \int_{B_r(x)\cap \Omega} |g(y)| \, dy.
\end{align*}
Then the maximal function is scaling invariant, which means 
\begin{align*}
    \mathcal{M}(g_r)(x) = \mathcal{M}(g)(rx),
\end{align*}
where $g_r(x) = g(rx)$ is the scaled function of $g$.
Moreover, the maximal operator $\mathcal{M}$ has the weak type $(1,1)$ property:
\begin{align*}
    |\{x \in \Omega : \mathcal{M}(g)(x) > t \}| \leq Ct^{-1} \norm{g}_{L^1(\Omega)}, \ \, \forall t>0. 
\end{align*}
We will also use an equivalent description of $L^p$.
\begin{lem}
    \cite{Caffarelli95} Let $g$ be a nonnegative and measurable function in a bounded domain $\Omega$. Let $\eta>0$, $M>1$, and $0<p<\infty$. Then,
    $$g \in L^p(\Omega) \ \Longleftrightarrow \ S=\sum_{k=1}^{\infty} M^{pk}|\{x \in \Omega : g(x) > \eta M^k\}| < \infty$$
    and
    $$C^{-1}S \leq \norm{g}^p_{L^p(\Omega)} \leq C(S+|\Omega|),$$
    where $C>1$ is a constant depending only on $\eta, M$ and $p$.
\end{lem}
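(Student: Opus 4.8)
The plan is to treat the lemma as a purely measure-theoretic statement --- in particular the constant $C$ will not depend on the dimension $n$ --- and to compare $\norm{g}_{L^{p}(\Omega)}^{p}$ with $S$ by splitting $\Omega$ along the dyadic superlevel sets of $g$. First I would dispose of the degenerate case: if $|\{x\in\Omega:g(x)=+\infty\}|>0$, then $\norm{g}_{L^{p}(\Omega)}=+\infty$, while simultaneously $|\{x\in\Omega:g(x)>\eta M^{k}\}|\ge|\{g=+\infty\}|>0$ for every $k$, whence $S=+\infty$; so the equivalence is trivial and we may assume $g<+\infty$ a.e.\ in $\Omega$. Granting this, set $A_{0}:=\{x\in\Omega:g(x)\le\eta M\}$ and, for $k\ge1$, $A_{k}:=\{x\in\Omega:\eta M^{k}<g(x)\le\eta M^{k+1}\}$, so that $\Omega$ is, up to a null set, the disjoint union $\bigcup_{k\ge0}A_{k}$, and write $\mu(t):=|\{x\in\Omega:g(x)>t\}|$.

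For the upper bound I would decompose $\int_{\Omega}g^{p}=\int_{A_{0}}g^{p}+\sum_{k\ge1}\int_{A_{k}}g^{p}$; on $A_{0}$ we have $g^{p}\le(\eta M)^{p}$ and $|A_{0}|\le|\Omega|$, while on $A_{k}$ we have $g^{p}\le(\eta M^{k+1})^{p}=\eta^{p}M^{p}\,M^{pk}$ and $|A_{k}|\le\mu(\eta M^{k})$, so that $\norm{g}_{L^{p}(\Omega)}^{p}\le\eta^{p}M^{p}\bigl(|\Omega|+S\bigr)$ at once. For the reverse inequality I would use that $g$ is finite a.e.\ to write $\mu(\eta M^{k})=\sum_{j\ge k}|A_{j}|$, interchange the two sums (legitimate since all terms are nonnegative), and bound the inner geometric sum by $\sum_{k=1}^{j}M^{pk}\le\frac{M^{p}}{M^{p}-1}M^{pj}$ --- this is exactly where the hypothesis $M>1$ is used --- to obtain $S\le\frac{M^{p}}{M^{p}-1}\sum_{j\ge1}M^{pj}|A_{j}|$. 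Since $g>\eta M^{j}$ on $A_{j}$ forces $\eta^{p}M^{pj}|A_{j}|\le\int_{A_{j}}g^{p}$, summing in $j$ gives $\sum_{j\ge1}M^{pj}|A_{j}|\le\eta^{-p}\norm{g}_{L^{p}(\Omega)}^{p}$, hence $S\le\frac{M^{p}}{(M^{p}-1)\eta^{p}}\norm{g}_{L^{p}(\Omega)}^{p}$.

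Combining the two estimates yields simultaneously the equivalence $g\in L^{p}(\Omega)\Leftrightarrow S<\infty$ and the two-sided bound with $C:=\max\bigl\{\eta^{p}M^{p},\ \frac{M^{p}}{(M^{p}-1)\eta^{p}},\ 1\bigr\}$, which depends only on $\eta,M,p$ as claimed. I do not expect any genuine obstacle here; the only points that want a little care are the reduction to $g$ finite a.e., the Tonelli interchange of the double sum, and keeping the geometric-series constant uniform in $j$. An alternative would be to start from the layer-cake identity $\norm{g}_{L^{p}(\Omega)}^{p}=p\int_{0}^{\infty}t^{p-1}\mu(t)\,dt$ and cut $(0,\infty)$ at the points $\eta M^{k}$; this works equally well, but when $0<p<1$ the factor $t^{p-1}$ is decreasing and one has to be slightly more careful about which endpoint to evaluate on, so I would favor the direct dyadic decomposition above.
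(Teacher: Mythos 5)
Your proof is correct and is exactly the standard dyadic-decomposition argument; the paper does not prove this lemma but simply cites \cite{Caffarelli95}, where the same decomposition of $\Omega$ into the sets $\{\eta M^{k}<g\le\eta M^{k+1}\}$ and the same geometric-series summation are used. Your handling of the degenerate case $|\{g=+\infty\}|>0$ and the Tonelli interchange are the right points of care, and the constant you obtain depends only on $\eta$, $M$, $p$ as required.
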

We introduce the following Vitali-type covering lemma.
\begin{lem} \label{covering}
    (\cite{Li18}, ($\theta, \Theta$)-type Covering Lemma). Let $E \subset F \subset B_1$ be measurable sets and $0<\theta<\Theta<1$ such that\\
    (i) $|E|>\theta|B_1|$,\\
    (ii) for any ball $B \subset B_1$, if $|B \cap E| \geq \theta|B|$, then $|B \cap F| \geq \Theta|B|$.\\
    Then
    \begin{align*}
     |B_1 \setminus F| \leq \left( 1-\frac{\Theta-\theta}{5^n} \right) |B_1 \setminus E|.   
    \end{align*}
\end{lem}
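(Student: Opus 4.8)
The plan is to recast the conclusion as a lower bound for $|F\setminus E|$ and then manufacture the required disjoint family of balls by a Vitali-type covering. Since $E\subset F\subset B_1$, the sets $B_1\setminus F$ and $F\setminus E$ partition $B_1\setminus E$, so $|B_1\setminus E|=|B_1\setminus F|+|F\setminus E|$, and the asserted estimate is equivalent to
$$|F\setminus E|\;\ge\;\frac{\Theta-\theta}{5^{n}}\,|B_1\setminus E|.$$
Hence it suffices to produce a countable, pairwise disjoint family $\{B_j\}$ of balls with $B_j\subset B_1$, with $|B_j\cap(F\setminus E)|\ge(\Theta-\theta)|B_j|$ for every $j$, and with $B_1\setminus E\subset\bigcup_j 5B_j$ up to a null set (here $5B$ denotes the ball concentric with $B$ of five times the radius). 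Indeed, summing the density bound over the disjoint $B_j$ and using $|5B_j|=5^{n}|B_j|$ together with the covering property gives $|F\setminus E|\ge(\Theta-\theta)\sum_j|B_j|\ge 5^{-n}(\Theta-\theta)|B_1\setminus E|$.

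The heart of the matter is to show that through almost every $x\in B_1\setminus E$ there passes such a \emph{good ball}, i.e.\ a ball $B\subset B_1$ with $x\in B$ and $|B\cap(F\setminus E)|\ge(\Theta-\theta)|B|$. I would split $B_1\setminus E=(F\setminus E)\sqcup(B_1\setminus F)$ and apply the Lebesgue density theorem. If $x$ is a density point of $F\setminus E$, then $|B_r(x)\cap(F\setminus E)|/|B_r(x)|\to 1>\Theta-\theta$ as $r\to0$, so any sufficiently small ball centred at $x$ is a good ball, and hypothesis (ii) is not even needed. If $x$ is a density point of $B_1\setminus F$ — hence also of $B_1\setminus E$ — I would use a continuity argument: the collection of balls $B=B_\rho(c)$ with $x\in B$ and $\overline{B}\subset\overline{B_1}$ is convex in $(c,\rho)$, hence connected, and $B\mapsto|B\cap E|/|B|$ is continuous on it. Along tiny balls centred at $x$ this ratio tends to $0<\theta$ (as $x$ is a density point of the complement of $E$), while at the admissible ball $B=B_1$ it equals $|E|/|B_1|>\theta$ by hypothesis (i); by the intermediate value theorem some admissible ball $B^{x}\subset B_1$ with $x\in B^{x}$ satisfies $|B^{x}\cap E|=\theta|B^{x}|$. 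Then hypothesis (ii) yields $|B^{x}\cap F|\ge\Theta|B^{x}|$, and since $E\subset F$ we get $|B^{x}\cap(F\setminus E)|=|B^{x}\cap F|-|B^{x}\cap E|\ge(\Theta-\theta)|B^{x}|$, so $B^{x}$ is a good ball.

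With a good ball through almost every point of $B_1\setminus E$ in hand, I would apply the classical Vitali $5r$-covering lemma to this collection, extract a countable disjoint subfamily $\{B_j\}$ whose $5$-dilates cover the union of all good balls (hence $B_1\setminus E$ up to a null set), and note that each $B_j$ is good and contained in $B_1$; the reduction of the first paragraph then finishes the proof. The one genuinely delicate point — and the place I expect to spend the most care — is the construction of the good ball at a density point of $B_1\setminus F$ lying close to $\partial B_1$: there every ball centred at $x$ and contained in $B_1$ is small and has $E$-density near $0$, so no centred ball reaches density $\theta$, and it is precisely hypothesis (i), which forces $E$ to have density larger than $\theta$ on the \emph{whole} ball $B_1$, that makes the intermediate value argument on off-centre balls succeed while staying inside $B_1$. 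The factor $5^{n}$ is then just the dimensional constant coming from the Vitali covering.
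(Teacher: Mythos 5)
Your argument is correct and is essentially the standard proof of this covering lemma as given in \cite{Li18} (the paper itself only quotes the statement from that reference without proof): the reduction to $|F\setminus E|\ge 5^{-n}(\Theta-\theta)\,|B_1\setminus E|$, the dichotomy between density points of $F\setminus E$ and of $B_1\setminus F$, the intermediate-value selection of a ball of exact $E$-density $\theta$ through points of the latter set, and the Vitali $5r$-extraction over disjoint good balls all match the cited argument. I see no gaps.
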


Finally, note that it suffices to assume that $\norm{u}_{L^{\infty}} \leq 1/16$, $\norm{f}_{L^{n}} \leq \epsilon_{0}$ for small enough $\epsilon_{0}$ to prove Theorem \ref{main} by normalization. In particular, we only need to prove the following lemma.

\begin{lem} \label{Smallness}
Let $\gamma \geq 0$. Assume $u \in C(\overline{B}_{2})$ satisfies 
\begin{align*}
|Du|^{\gamma} \mathcal{P}_{\lambda,\Lambda}^{-}(D^{2}u) \leq f \leq |Du|^{\gamma} \mathcal{P}_{\lambda,\Lambda}^{+}(D^{2}u) \quad\text{in}\ B_{2}
\end{align*}
in the viscosity sense. Then there exist constants $\delta=\delta(n,\lambda, \Lambda,\gamma)>0$ and $\epsilon_{1} = \epsilon_{1}(n,\lambda, \Lambda,\gamma)>0$ such that if $\norm{f}_{L^{n}(B_{2})} \leq \epsilon_{1}$ and $\norm{u}_{L^{\infty}(B_2)} \leq 1/16$, then 
$$\norm{|Du|^{\gamma}Du}_{W^{1, \delta}(B_{1})} \leq C,$$
where $C=C(n,\lambda,\Lambda,\gamma)>0$.
\end{lem}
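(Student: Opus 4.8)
The plan is to deduce Lemma \ref{Smallness} from a power-decay estimate on the distribution function of the ``bad set'' of points having no touching $C^{1,\alpha}$ cone of bounded opening, and then to relate this bad set to the pointwise behavior of $|Du|^{\gamma}Du$. Concretely, I would fix a large constant $M>1$ and, for each $k\ge 0$, let $A_k$ denote the set of points $x\in B_1$ such that $x\notin T_{M^k}(u,\overline{B}_2)$, i.e.\ there is no $C^{1,\alpha}$ cone of opening $M^k$ touching $u$ at $x$ from both sides. The heart of the matter is to prove a decay estimate of the form $|A_{k+1}|\le (1-\mu)|A_k|$ for some universal $\mu\in(0,1)$, which iterates to $|A_k|\le (1-\mu)^k|B_1|$; this is exactly the content of Theorem \ref{tangent} as advertised, and it is proved in Section \ref{sec4} via the key density Lemma \ref{Key} combined with the $(\theta,\Theta)$-type covering Lemma \ref{covering}. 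Taking this decay for granted, I would choose $\delta>0$ small enough that $M^{\delta}(1-\mu)<1$, so that $\sum_k M^{(1+\gamma)\delta k}|A_k|<\infty$ with a universal bound.

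Next I would convert this into integrability of $|Du|^{\gamma}Du$. On the good set $T_{M^k}\setminus T_{M^{k-1}}$, the function $u$ has tangent $C^{1,\alpha}$ cones of opening comparable to $M^k$ from above and below at $x$; as the introduction points out, the defining relation $y_0 = x + |Du|^{\gamma}Du(x)$ for the vertex of a touching cone shows that pinching $u$ between a concave and a convex cone of opening $K$ forces $|Du|^{\gamma}Du$ to be (in the appropriate pointwise/difference-quotient sense) Lipschitz-type controlled with constant $\lesssim K$ at $x$. Making this precise—this is presumably the content of Section \ref{sec3} reducing Lemma \ref{Smallness} to Theorem \ref{tangent}—one obtains that the ``pointwise gradient of $|Du|^{\gamma}Du$'' at $x$ is $\lesssim M^{k}$ on the good set at level $k$, while on $\bigcap_k A_k$ (measure zero) it may blow up. Applying the $L^p$-characterization lemma quoted from \cite{Caffarelli95} with $g$ equal to this pointwise derivative, $p=\delta$, $\eta$ and $M$ as above, the series $S=\sum_k M^{\delta k}|\{g>\eta M^k\}|\le \sum_k M^{\delta k}|A_k|<\infty$ yields $g\in L^{\delta}(B_1)$ with universal bound, and since $u$ is bounded by $1/16$ also $|Du|^{\gamma}Du\in L^{\delta}$, hence $|Du|^{\gamma}Du\in W^{1,\delta}(B_1)$ with the stated estimate. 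The smallness hypothesis $\|f\|_{L^n(B_2)}\le\epsilon_1$ enters precisely to start the induction: it guarantees $|A_0|$ is controlled (indeed that the first-step density estimate of Lemma \ref{Key} applies), via an ABP-type bound using the $C^{1,\alpha}$ cone as barrier in place of the paraboloid.

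The main obstacle, and the place where the genuinely new work lies, is the key density Lemma \ref{Key}: showing that if a definite fraction of a small ball $B$ already lies in $T_K$, then a larger fraction lies in $T_{MK}$, so that the covering lemma can be triggered. As the authors flag, a $C^{1,\alpha}$ cone slid from below touches $u$ at a point whose distance to the vertex is not scale-free—the Hessian of the cone blows up near the vertex and decays away from it—so the argument must be split into three regimes according to $\mathrm{dist}(x_0,y_0)$, and in each regime one estimates $\operatorname{tr} D_{x_0}y_0 = n + \mathrm{div}(|Du|^{\gamma}Du)$ and invokes the area formula to bound $|T_K^{-}(u,V)|$ below by a constant times $|V|$ minus an $L^n$-error in $f$. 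The second subtlety is that, unlike paraboloids, $C^{1,\alpha}$ cones are not closed under addition or under adding linear functions, so in Step 2 of the key lemma one cannot simply subtract two cones; instead one passes to the Hessian of the difference and approximates it by a more tractable function, then argues by the viscosity inequalities \eqref{PDE} that the approximating function inherits the required bounds. I would expect the bulk of the technical effort—localization, the covering iteration setup, and the three-case distance analysis feeding the area formula—to be concentrated there, while the reduction sketched above is comparatively routine once Theorem \ref{tangent} is in hand.
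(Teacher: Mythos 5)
Your overall architecture matches the paper's: Lemma \ref{Smallness} is indeed reduced to the power decay of $|B_1\setminus T_t|$ (Theorem \ref{tangent}), which in turn rests on the key density Lemma \ref{Key}, the covering Lemma \ref{covering}, and the first-step density Lemma \ref{key2}; and the conversion of the decay into an $L^\delta$ bound via the dyadic sum $\sum_k M^{(1+\gamma)\delta k}|B_1\setminus T_{M^k}|$ is exactly the mechanism used. However, the step you dismiss as ``comparatively routine'' contains two genuine gaps.

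First, for a merely continuous viscosity solution the quantity you call the ``pointwise gradient of $|Du|^{\gamma}Du$'' does not exist a priori, so the inequality $|D(|Du|^{\gamma}Du)|\lesssim K^{1+\gamma}$ on $T_K$ has no meaning without a regularization. The paper handles this by passing to the sup-convolution $u^{\epsilon}$ (which is $C^{1,1}$ from below and still a subsolution with data $f^{\epsilon}$), establishing the matrix inequality $-\tilde g^{\epsilon}I\le D(|Du^{\epsilon}|^{\gamma}Du^{\epsilon})_{\mathrm{sym}}\le \tilde g^{\epsilon}I$ almost everywhere — note that the upper bound is \emph{not} symmetric with the lower one: it requires combining the one-sided cone bound with the equation itself, via $\mathcal{P}^{-}(|Du^{\epsilon}|^{\gamma}D^2u^{\epsilon})\le f^{\epsilon}$, to control the positive eigenvalues — and then passing to the limit $\epsilon\to 0$ in a distributional (integrated-by-parts) formulation. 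Without this, your argument only applies to classical solutions. Second, your justification that $|Du|^{\gamma}Du\in L^{\delta}$ ``since $u$ is bounded by $1/16$'' is false as stated: an $L^\infty$ bound on $u$ gives no integrability of $Du$, and for $\delta<1$ one cannot recover the function from its derivative by embedding. The correct argument, as in the paper, is that at a touching point $x_0\in T^{-}_t$ the gradient of $u^{\epsilon}$ coincides with that of the touching cone, so $|Du^{\epsilon}(x_0)|=t|x_0-y_0|^{\alpha}\le 3t$, whence $||Du^{\epsilon}|^{\gamma}Du^{\epsilon}|$ is pointwise controlled by the same level function whose $L^{\delta}$ norm the decay estimate bounds. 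With these two repairs your reduction coincides with Section \ref{sec3} of the paper.
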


We prove Theorem \ref{main} assuming Lemma \ref{Smallness}. Suppose that $u$ satisfies the assumption of Theorem \ref{main}, Let
\begin{align*}
    a = 16 \norm{u}_{L^{\infty}(B_2)} + \left( \epsilon_1 \norm{f}_{L^n(B_2)} \right)^{\frac{1}{1+\gamma}} +\epsilon
\end{align*}
for $\epsilon>0$. Then $\tilde{u}(x) := \frac{1}{a}u(x)$ and $\tilde{f}(x) := \frac{1}{a^{1+\gamma}}f(x)$ satisfy
\begin{align*}
    |D\tilde{u}|^{\gamma} \mathcal{P}_{\lambda,\Lambda}^{-}(D^{2}\tilde{u}) \leq \tilde{f} \leq |D\tilde{u}|^{\gamma} \mathcal{P}_{\lambda,\Lambda}^{+}(D^{2}\tilde{u})
\end{align*}
with $\norm{\tilde{u}}_{L^{\infty}(B_2)} \leq 1/16$ and $\norm{\tilde{f}}_{L^{n}(B_{2})} \leq \epsilon_{1}$.
By Lemma \ref{Smallness}, we obtain $\norm{|D\tilde{u}|^{\gamma}D\tilde{u}}_{W^{1, \delta}(B_{1})} \leq C$.
Therefore, we have
\begin{align*}
    \norm{|Du|^{\gamma}Du}_{W^{1, \delta}(B_{1})} \leq Ca^{1+\gamma} \leq C \left( \norm{u}_{L^{\infty}(B_{2})}^{1+\gamma} + \norm{f}_{L^{n}(B_{2})} +\epsilon^{1+\gamma} \right),
\end{align*}
which proves Theorem \ref{main} by letting $\epsilon \rightarrow 0$.

\section{The relation between a tangent $C^{1,\alpha}$ cone and $D(|Du|^\gamma Du$)}
\label{sec3}
What we will prove in this section is the measure decay estimate of $|B_1 \setminus T_{t}|$, which implies that almost every point has a tangent $C^{1,\alpha}$ cone. Throughout this paper, we always put $\alpha := \frac{1}{1+\gamma}$.

\begin{thm} \label{tangent}
Let $\gamma \geq 0$. Assume $u \in C(\overline{B}_{2})$ satisfies 
\begin{align*}
|Du|^{\gamma} \mathcal{P}_{\lambda,\Lambda}^{-}(D^{2}u) \leq f \leq |Du|^{\gamma} \mathcal{P}_{\lambda,\Lambda}^{+}(D^{2}u) \quad\text{in}\ B_{2}
\end{align*}
in the viscosity sense.
Then there exist constants $\epsilon_{1} = \epsilon_{1}(n,\lambda, \Lambda, \gamma)>0$, $\sigma=\sigma(n,\lambda, \Lambda,\gamma)>0$ such that if $\norm{f}_{L^{n}(B_{2})} \leq \epsilon_{1}$ and $\osc_{\overline{B}_{2}} u \leq 1/8$, then for any $t>0$
$$|B_1 \setminus T_{t}| \leq Ct^{-\sigma},$$
where $C=C(n,\lambda,\Lambda,\gamma)>0$.
\end{thm}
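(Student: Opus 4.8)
The plan is to establish the measure decay $|B_1 \setminus T_t| \leq C t^{-\sigma}$ by the standard Calderón–Zygmund-type iteration combined with the $(\theta,\Theta)$-type covering lemma (Lemma \ref{covering}), reducing everything to a single-step density estimate. Concretely, I would first prove a \emph{density lemma}: there exist universal $\theta \in (0,1)$ and a universal constant $M>1$ such that, for any ball $B = B_r(x_*) \subset B_1$ and any $K>0$, if
\[
|B \cap T_K(u, \overline{B}_2)| \geq \theta |B|,
\]
then $B \cap T_{MK}(u,\overline{B}_2) = B$ — or at least $|B \cap T_{MK}| \geq \Theta |B|$ for some $\Theta$ close to $1$. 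This is where the key lemma of Section \ref{sec4} (Lemma \ref{Key}, to which the text alludes) enters: sliding concave/convex $C^{1,\alpha}$ cones with vertices in $\overline{B}_2$ from below and above, one controls the contact set $T_K^{\pm}$ in terms of the vertex set via the area formula, using the heuristic identity $\operatorname{tr} D_{x_0} y_0 = n + \operatorname{div}(|Du|^\gamma Du)(x_0)$ bounded through the viscosity inequalities and $\|f\|_{L^n}$ small. The smallness $\|f\|_{L^n(B_2)} \leq \epsilon_1$ and the oscillation bound $\operatorname{osc}_{\overline{B}_2} u \leq 1/8$ are exactly what make the base case (some fixed $T_{K_0}$ covering a definite fraction of $B_1$) work, so that the iteration can start.

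Second, with the density lemma in hand I would run the iteration. Set $t_k = M^k K_0$ and $A_k = B_1 \setminus T_{t_k}$. Since $T_{t} \subset T_{t'}$ for $t \leq t'$ (larger opening is a weaker constraint), the sets $A_k$ are decreasing. Apply Lemma \ref{covering} with $E = T_{t_k} \cap B_1$, $F = T_{t_{k+1}} \cap B_1$, $\theta$ and $\Theta$ from the density lemma: condition (i) holds once $k$ is large enough that $|T_{t_k}| > \theta |B_1|$ (guaranteed by the base case and monotonicity), and condition (ii) is precisely the density lemma applied to subballs. This yields
\[
|A_{k+1}| \leq \Bigl(1 - \tfrac{\Theta - \theta}{5^n}\Bigr) |A_k|,
\]
hence $|A_k| \leq (1-\mu)^k |B_1|$ with $\mu = \tfrac{\Theta-\theta}{5^n} \in (0,1)$. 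Writing $\sigma = -\log(1-\mu)/\log M > 0$ gives $|B_1 \setminus T_{t_k}| \leq C t_k^{-\sigma}$, and for general $t$ one interpolates between consecutive $t_k$'s using monotonicity of $T_t$ in $t$, adjusting $C$ by a factor $M^\sigma$. This is a completely routine Calderón–Zygmund argument once the density lemma is available.

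The main obstacle is unquestionably the density lemma, i.e., the analysis underlying Lemma \ref{Key}. Two difficulties flagged in the introduction must be confronted there. First, since the Hessian of a $C^{1,\alpha}$ cone blows up at the vertex and decays away from it, the relation between opening $K$, the distance $|x_0 - y_0|$ between touching and vertex points, and the bound one can extract from the equation is non-uniform; one must split into regimes according to whether $|x_0 - y_0|$ is small, comparable, or large relative to $r$, and treat each separately — this is the source of the three-case split the authors mention. Second, because sums and differences of $C^{1,\alpha}$ cones (and of a cone and a linear function) are no longer $C^{1,\alpha}$ cones, the usual trick of subtracting the extremal sliding paraboloid to localize cannot be applied verbatim; instead one works at the level of $D(|Du|^\gamma Du)$, approximating the difference of the vertex maps by a more tractable function and carefully tracking the Jacobian in the area formula. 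Making the area-formula change of variables rigorous for merely-continuous viscosity solutions (rather than $C^2$ solutions, for which the heuristic computation is transparent) will require the touching-cone machinery to produce enough pointwise second-order information on the contact set, in the spirit of the Alexandrov–Bakelman–Pucci / Caffarelli–Cabré approach, and this is the technical heart of the paper.
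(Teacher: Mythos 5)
Your overall architecture (base-case density estimate, a one-step improvement lemma, the $(\theta,\Theta)$-covering lemma, geometric iteration, interpolation in $t$) is the same as the paper's, and your identification of the difficulties inside the key lemma is accurate. However, as written the iteration does not close, for a concrete reason: the density lemma that is actually provable is \emph{not} ``if $|B\cap T_K|\geq\theta|B|$ then $|B\cap T_{MK}|\geq\Theta|B|$.'' After rescaling $\tilde u(y)=r^{-(1+\alpha)}M^{-k}u(ry+x_0)$ one gets $\tilde f=M^{-k(1+\gamma)}f(ry+x_0)$, and the key lemma needs a point of the rescaled ball at which $\mathcal{M}(|\tilde f|^n)\leq\epsilon_2$ in order to run the barrier/area-formula argument (the contact-set estimate degenerates where $f$ is large in an averaged sense). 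Hence the correct hypothesis set is $E_k=B_1\cap T_{M^k}\cap\{\mathcal{M}(|f|^n)\leq\epsilon M^{k(1+\gamma)n}\}$, not $B_1\cap T_{M^k}$ alone. With this correction the covering lemma gives $|B_1\setminus T_{M^{k+1}}|\leq\mu_0\,|B_1\setminus E_k|\leq\mu_0\bigl(\alpha_k+\beta_k\bigr)$, where $\beta_k=|\{\mathcal{M}(|f|^n)>\epsilon M^{k(1+\gamma)n}\}|$, rather than the pure geometric recursion $|A_{k+1}|\leq(1-\mu)|A_k|$ you assert. One must then invoke the weak type $(1,1)$ bound for the maximal operator to get $\beta_k\leq CM^{-k(1+\gamma)n}$ and sum the convolution $\sum_{i<k}\mu_0^{k-i}\beta_i\leq Ck\mu_1^k$ to recover the power decay; this bookkeeping is essential, not cosmetic. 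A second, smaller omission: the key lemma controls only $T^-_{M^k}$, so you must run the whole argument for both $u$ and $-u$ and combine $|B_1\setminus T_{M^k}|\leq|B_1\setminus T^-_{M^k}|+|B_1\setminus T^+_{M^k}|$ at the end.

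Beyond that, your proposal defers the entire content of the density lemma (the three-regime analysis of the vertex-to-contact distance, the barrier construction, the Lipschitz estimate for $Du$ on the contact set needed to justify the area formula, and the Jacobian bound for the vertex map) to ``the technical heart of the paper'' without supplying any of it. That part is the bulk of the proof; what you have written is a correct skeleton of the reduction of Theorem \ref{tangent} to the key lemma, modulo the maximal-function correction above, but it is not yet a proof.
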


How can we find a bound of $D(|Du|^\gamma Du)$ using a tangent $C^{1,\alpha}$ cone?
Let us consider a convex $C^{1,\alpha}$ cone $P(x)=\frac{1}{1+\alpha}|x-y_0|^{1+\alpha} + C$ of vertex $y_0 \in B_2$ and opening $1$ which touches $u$ above at $x_0$, that is, $u(x) \overset{x_0}{\leq} P(x)$.
Assume that $u$ is $C^2$ near $x_0$. If $x_0 = y_0$, then $Du(x_0) = DP(x_0) = 0$, therefore $D(|Du|^\gamma Du)(x_0) = 0$.
If $x_0 \neq y_0$, we get 
\begin{align*}
    Du(x_0) = DP(x_0), \ D^2u(x_0) \leq D^2P(x_0).
\end{align*}
Since $DP(x) = |x-y_0|^\alpha \frac{x-y_0}{|x-y_0|}$, $D^2P(x) = \frac{1}{|x-y_0|^{1-\alpha}}\left(I+(1-\alpha)\frac{x-y_0}{|x-y_0|} \otimes \frac{x-y_0}{|x-y_0|}\right)$ and $\alpha\gamma =1-\alpha$,  we have
\begin{align*}
    |DP|^\gamma D^2P(x) = I+(1-\alpha)\frac{x-y_0}{|x-y_0|} \otimes \frac{x-y_0}{|x-y_0|} \leq 2I, \quad \forall x \in \mathbb{R}^n \setminus \{y_0\}.
\end{align*}
Finally, we obtain
\begin{align*}
    |Du|^\gamma D^2u(x_0) \leq |DP|^\gamma D^2P(x_0) \leq 2I.
\end{align*}
Similarly, we can find a lower bound of $|Du|^\gamma D^2u$ when there exists a concave $C^{1,\alpha}$ cone touching $u$ from below.
Therefore, if $x_0 \in T_K(u)$ and $u$ is $C^2$ near $x_0$, we get
\begin{align*}
    -2K^{1+\gamma}I \leq |Du|^\gamma D^2u(x_0) \leq 2K^{1+\gamma}I.
\end{align*}
Moreover, we have
\begin{align*}
    |D(|Du|^\gamma Du)| &= \left|\left(I+ \gamma \frac{Du}{|Du|} \otimes \frac{Du}{|Du|}\right)(|Du|^\gamma D^2u)\right| \\
    &\leq 2(1+\gamma)K^{1+\gamma}.
\end{align*}
Now assuming that $u$ is $C^2$ and that Theorem \ref{tangent} is proved, we define $g$ as $g(x) := \frac{1}{2}\inf \{ t : x \in T_t(u) \}^{\frac{1}{1+\gamma}}$.
Then we have
\begin{align*}
    |D(|Du|^\gamma Du)(x)| \leq (1+\gamma)g(x).
\end{align*}
By Theorem \ref{tangent}, we obtain
\begin{align*}
    |B_1 \setminus T_t| = |\{ x \in B_1 : (2g(x))^{1+\gamma} \geq t\}| \leq Ct^{-\sigma}.
\end{align*}
Therefore, we conclude that $g \in L^\delta$ and $|Du|^\gamma Du \in W^{1,\delta}$ for some $\delta>0$ under the assumption that $u$ is $C^2$.

We prove the Lemma \ref{Smallness} assuming the Theorem \ref{tangent}.
Since $u$ may not be $C^2$, we will consider sup-convolution of $u$ defined by
\begin{align*}
    u^\epsilon(x_0) := \inf_{x\in B_1} \left(u(x) + \frac{1}{\epsilon}|x-x_0|^2 \right).
\end{align*}
Then $u^\epsilon$ is $C^{1,1}$ from below, $u^\epsilon \rightarrow u$ uniformly on compact subsets and $u^\epsilon$ satisfies 
\begin{align*}
    |Du^\epsilon|^{\gamma} \mathcal{P}_{\lambda,\Lambda}^{-}(D^{2}u^\epsilon) \leq f^\epsilon \quad \text{in } B_{2-\epsilon}
\end{align*}
in the viscosity sense, for some $f^\epsilon \in C(B_{2-\epsilon})$ with $f^\epsilon \rightarrow f$ uniformly on compact subsets.

We define $g^\epsilon(x) := \frac{1}{2}\inf \{ t : x \in T_t(u^\epsilon) \}^{\frac{1}{1+\gamma}}$
By Theorem \ref{tangent}, we have $|B_1 \setminus T^{-}_t(u^\epsilon)| \leq Ct^{-\sigma}$, therefore $\norm{g^\epsilon}_{L^\delta}$ is bounded.
Since $u^\epsilon$ is $C^{1,1}$ a.e., by using the above argument we have
$$-g^\epsilon I \leq |Du^\epsilon|^\gamma D^2u^\epsilon \quad \text{ a.e. }$$
Let $e_i(x)$ be an eigenvalue of $|Du^\epsilon|^\gamma D^2u^\epsilon(x)$.
Then the above inequality implies $-g^\epsilon \leq e_i$.
Moreover, since $\mathcal{P}^{-}(|Du^\epsilon|^{\gamma}D^2u^\epsilon) \leq f^\epsilon$,
we have $\Lambda \sum_{e_i <0}e_i + \lambda \sum_{e_i>0}e_i \leq f^\epsilon$.
Therefore, $\lambda e_i \leq \lambda\sum_{e_i>0}e_i \leq f^\epsilon -\Lambda \sum_{e_i <0}e_i \leq f^\epsilon +\Lambda ng^\epsilon$.
Thus, we have
\begin{align*}
    -g^\epsilon I \leq |Du^\epsilon|^\gamma D^2u^\epsilon \leq \left(\frac{f^\epsilon}{\lambda} + \frac{\Lambda}{\lambda}ng^\epsilon\right)I \quad \text{ a.e. }
\end{align*}
For nonsymmetric matrix $M$, we write $M_{\text{sym}} := \frac{1}{2}(M+M^T)$.
Since $D(|Du|^\gamma Du)$ may be nonsymmetric, we will instead use $D(|Du|^\gamma Du)_{\text{sym}}$.
Notice that
\begin{align*}
    D(|Du^\epsilon|^\gamma Du^\epsilon)_{\text{sym}} &= |Du^\epsilon|^\gamma D^2u^\epsilon + \frac{\gamma}{2}\frac{Du^\epsilon}{|Du^\epsilon|} \otimes \frac{Du^\epsilon}{|Du^\epsilon|} |Du^\epsilon|^\gamma D^2u^\epsilon + \frac{\gamma}{2}|Du^\epsilon|^\gamma D^2u^\epsilon \frac{Du^\epsilon}{|Du^\epsilon|} \otimes \frac{Du^\epsilon}{|Du^\epsilon|} \\
    &=(I+B)A(I+B)-BAB,
\end{align*}
where $A = |Du^\epsilon|^\gamma D^2u^\epsilon$ and $B=\frac{\gamma}{2}\frac{Du^\epsilon}{|Du^\epsilon|} \otimes \frac{Du^\epsilon}{|Du^\epsilon|}$.
Since $0\leq B \leq \frac{\gamma}{2}I$, we have $(I+B)A(I+B) \geq -\left(1+\frac{\gamma}{2}\right)^2g^\epsilon I$ and $BAB \leq \frac{\gamma^2}{4}\left(\frac{f^\epsilon}{\lambda} + \frac{\Lambda}{\lambda}ng^\epsilon\right)I$.
Therefore, we obtain
\begin{align*}
    -\tilde{g}^\epsilon I=-\left(\left(1+\frac{\gamma}{2}\right)^2g^\epsilon+\frac{\gamma^2}{4}\left(\frac{f^\epsilon}{\lambda} + \frac{\Lambda}{\lambda}ng^\epsilon\right)\right)I\leq D(|Du^\epsilon|^\gamma Du^\epsilon)_{\text{sym}}
\end{align*}
where $\tilde{g}^\epsilon=\left(1+\frac{\gamma}{2}\right)^2g^\epsilon+\frac{\gamma^2}{4}\left(\frac{f^\epsilon}{\lambda} + \frac{\Lambda}{\lambda}ng^\epsilon\right)$.
Notice that $\norm{\tilde{g}^\epsilon}_{L^\delta}$ is also bounded.
Thus, for any constant vector $e \in \mathbb{R}^n$ with $|e| = 1$, we have $e^T(-\tilde{g}^\epsilon I)e \leq e^TD(|Du^\epsilon|^\gamma Du^\epsilon)_{\text{sym}}e$. For any $0 \leq \phi \in C^{\infty}_0(B_1)$, we get
\begin{align*}
    \int_{B_1} e^T D(|Du^\epsilon|^\gamma Du^\epsilon)_{\text{sym}}e \phi \, dx &= \int_{B_1} \sum_{i,j} D_i(|Du^\epsilon|^\gamma D_ju^\epsilon)e_i e_j \phi \, dx \\
    &=- \int_{B_1} \sum_{i,j} (|Du^\epsilon|^\gamma D_ju^\epsilon)D_i\phi e_i e_j \, dx \\
    &=- \int_{B_1} e^T D\phi \otimes (|Du^\epsilon|^\gamma Du^\epsilon) e \, dx.
\end{align*}
Therefore, we obtain that for any $0\leq \phi\in C^{\infty}_0 (B_1)$ and $e \in \mathbb{R}^n$ with $|e| = 1$,
\begin{align} \label{weak}
    \int_{B_1} -\tilde{g}^\epsilon\phi \, dx \leq - \int_{B_1} e^T D\phi \otimes (|Du^\epsilon|^\gamma Du^\epsilon) e \, dx.
\end{align}
Note that for any $x_0 \in T^{-}_t(u^\epsilon)$, there exists a $C^{1,\alpha}$ cone $P$ with vertex $y_0 \in B_{2}$ and opening $t$ which touches $u^\epsilon$ at $x_0$.
Since $u^\epsilon$ is $C^{1,1}$ and $P$ is $C^1$, we obtain $|Du^\epsilon(x_0)| = |DP(x_0)| = t|x_0 - y_0|^\alpha  \leq 3t$.
Therefore, we get $||Du^\epsilon|^\gamma Du^\epsilon(x)| \leq 3g^\epsilon(x)$ and $$\norm{|Du^\epsilon|^\gamma Du^\epsilon}_{L^\delta(B_1)} \leq C,$$
for some constant $C(n,\lambda,\Lambda, \gamma)>0$. 
Letting $\epsilon \rightarrow 0$ at \eqref{weak}, we conclude that 
$$\int_{B_1} -\tilde{g}\phi \, dx \leq - \int_{B_1} e^T D\phi \otimes(|Du|^\gamma Du) e \, dx,$$
which implies $-\tilde{g}I \leq D(|Du|^\gamma Du)_{\text{sym}}$ in weak sense.
Using the same method as for the supersolution case, we obtain that $-\tilde{g}I \leq D(|Du|^\gamma Du)_{\text{sym}} \leq \tilde{g}I$ in weak sense, and hence
$$\norm{|Du|^{\gamma}Du}_{W^{1, \delta}(B_{1})} \leq \norm{\tilde{g}}_{L^\delta(B_1)} \leq C.$$

\section{Proof of Theorem \ref{tangent}}
\label{sec4}
In this section, we give a proof of Theorem \ref{tangent}.
To prove Theorem \ref{tangent}, we need the key lemma, Lemma \ref{Key}, which implies that if there is a small measure of tangent points of opening $1$ and if the maximal function of $f$ is small enough, then there is a large measure of tangent points of opening $M>1$.
The main strategy of our proof is based on \cite{Li18}, \cite{Savin07} and \cite{Colombo14}.
However, since the growth and degeneracy of a $C^{1,\alpha}$ cone are different from those of a paraboloid, the proof is divided into three cases according to the distance of the vertex of $C^{1,\alpha}$ cone.
Note that the main difficulty of Steps 1 and 2 in Case 1 comes from the growth of a $C^{1,\alpha}$ cone, and the difficulty of Step 3 comes from degeneracy of the equation.

\begin{lem} \label{Key}
Let $B_{2} \subset \Omega$ with $\Omega$ is convex, $\gamma \geq 0$ and $0<\theta_{0}<1$.
Assume that $u \in C(\Omega)$ satisfies
\begin{align} \label{subPDE}
|Du|^{\gamma} \mathcal{P}_{\lambda,\Lambda}^{-}(D^{2}u) \leq f \quad\text{in}\ B_{2}
\end{align}
in the viscosity sense.
Then there exist constants $\epsilon_{2} = \epsilon_{2}(n,\lambda, \Lambda, \theta_{0})>0$, $M=M(n,\lambda, \Lambda)>1$, $0< \Theta = \Theta(n,\lambda, \Lambda)<1$ and $0<\theta = \theta(n,\lambda, \Lambda, \theta_{0}) < \min{(\theta_{0}, \Theta)}<1$ such that if 
$$|B_{1} \cap T^{-}_{1} \cap \{x\in \Omega : \mathcal{M}(|f|^n)(x) \leq \epsilon_2\}| \geq \theta |B_{1}|,$$ then 
$$|B_{1} \cap T^{-}_{M}| \geq \Theta |B_{1}|.$$
\end{lem}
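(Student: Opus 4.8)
The plan is to prove Lemma~\ref{Key} as a measure--growth (density) lemma of Krylov--Safonov / Caffarelli--Cabr\'e type, in which the sliding paraboloids are replaced throughout by concave $C^{1,\alpha}$ cones of opening $M$, the degeneracy being absorbed by the arithmetic identity $\alpha(1+\gamma)=1$. By a standard sup-convolution reduction (replacing $u$ by $u^{\epsilon}$, which is semiconcave, hence twice differentiable a.e.\ by Alexandrov's theorem, still a viscosity supersolution of \eqref{subPDE} with right-hand side converging to $f$ locally uniformly, and whose contact sets $T_{K}^{-}(u^{\epsilon},\cdot)$ approximate those of $u$), I may assume $u$ is twice differentiable almost everywhere and suppress this point.

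\emph{Step 1: the contact identity and an ABP-type Jacobian bound.} Let a concave $C^{1,\alpha}$ cone $P(x)=-\frac{M}{1+\alpha}|x-y_{0}|^{1+\alpha}+c$ touch $u$ from below at a point $x_{0}\neq y_{0}$ of twice differentiability. Since $\alpha\gamma=1-\alpha$ one obtains the exact relations
\begin{align*}
|Du|^{\gamma}Du(x_{0})=|DP|^{\gamma}DP(x_{0})=-M^{1+\gamma}(x_{0}-y_{0}),\qquad |Du|^{\gamma}D^{2}u(x_{0})\ge|DP|^{\gamma}D^{2}P(x_{0})=-M^{1+\gamma}Q,
\end{align*}
where $Q:=I+(\alpha-1)\hat r\otimes\hat r$, $\hat r=(x_{0}-y_{0})/|x_{0}-y_{0}|$, so $\alpha I\le Q\le I$; the viscosity inequality gives in addition $\mathcal{P}_{\lambda,\Lambda}^{-}\big(|Du|^{\gamma}D^{2}u(x_{0})\big)\le f(x_{0})$, and the set $\{Du=0\}$ (where $x_{0}=y_{0}$) contributes nothing since there $|Du|^{\gamma}Du$ is differentiable with vanishing differential. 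Introducing the contact map $\Phi(x):=x+M^{-(1+\gamma)}|Du|^{\gamma}Du(x)$, so $\Phi(x_{0})=y_{0}$, and writing $A:=|Du|^{\gamma}D^{2}u(x_{0})$, $\hat v:=Du(x_{0})/|Du(x_{0})|=-\hat r$, one has $D(|Du|^{\gamma}Du)(x_{0})=A(I+\gamma\hat v\otimes\hat v)$, and the algebraic identity $Q(I+\gamma\hat v\otimes\hat v)=I$ (equivalent to $\alpha(1+\gamma)=1$) turns this into
\begin{align*}
D\Phi(x_{0})=M^{-(1+\gamma)}S(I+\gamma\hat v\otimes\hat v),\qquad S:=A+M^{1+\gamma}Q\ge0,
\end{align*}
hence $\det D\Phi(x_{0})=(1+\gamma)M^{-n(1+\gamma)}\det S$. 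Using $S\le A+M^{1+\gamma}I$, $A\ge-M^{1+\gamma}I$ and the Pucci inequality to confine the eigenvalues of $S$ to $\big[0,C(n,\lambda,\Lambda)(M^{1+\gamma}+f(x_{0})^{+})\big]$, the arithmetic--geometric mean inequality gives $\det S\le C(M^{1+\gamma}+f(x_{0})^{+})^{n}$, so that
\begin{align*}
\det D\Phi(x_{0})\le C(n,\lambda,\Lambda,\gamma)\big(1+M^{-n(1+\gamma)}|f(x_{0})|^{n}\big)\qquad\text{for a.e.\ }x_{0}\in T_{M}^{-}.
\end{align*}

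\emph{Step 2: filling a ball of fixed size with contact points.} By the density hypothesis fix an anchor $\bar x\in B_{1}\cap T_{1}^{-}\cap\{\mathcal{M}(|f|^{n})\le\epsilon_{2}\}$ together with a concave $C^{1,\alpha}$ cone $Q_{0}$ of opening $1$ and vertex in $\overline\Omega$ satisfying $Q_{0}\overset{\bar x}{\le}u$ in $\Omega$. For each vertex $y_{0}$ in a ball $V$ of fixed (universal) radius, slide the concave $C^{1,\alpha}$ cone of opening $M$ and vertex $y_{0}$ upward until its first contact with the graph of $u$ in $\Omega$, obtaining a contact point $x_{0}(y_{0})$; the lower barrier $u\ge Q_{0}$ confines the contact to a bounded region (using $M>1$, so the opening-$M$ cone dominates $Q_{0}$ at infinity) and, through a comparison with $Q_{0}$ exploiting $\alpha(1+\gamma)=1$, controls the location of $x_{0}(y_{0})$. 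The goal is to show, for a suitable universal $M$, that $x_{0}(y_{0})\in B_{1}$ for every $y_{0}\in V$ with the contact interior, so that $V\subseteq\Phi\big(T_{M}^{-}\cap B_{1}\big)$. Because the Hessian of a $C^{1,\alpha}$ cone is large near the vertex and small far from it --- unlike a paraboloid's constant Hessian --- this localization forces a case analysis according to the distance $|x_{0}(y_{0})-y_{0}|$ between the vertex and the contact point (three regimes); and since the difference of two $C^{1,\alpha}$ cones is not a cone, the comparison with $Q_{0}$ cannot be carried out at the level of cones but only after passing to the Hessian of the relevant difference, dominating it by an explicit auxiliary function and integrating back. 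This is the step I expect to be the main obstacle.

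\emph{Step 3: conclusion via the area formula.} As $\Phi$ is Lipschitz, Step~2 and Step~1 give
\begin{align*}
|V|\le\big|\Phi(T_{M}^{-}\cap B_{1})\big|\le\int_{T_{M}^{-}\cap B_{1}}\det D\Phi\,dx\le C\,|T_{M}^{-}\cap B_{1}|+CM^{-n(1+\gamma)}\int_{B_{1}}|f|^{n}\,dx,
\end{align*}
and since $\bar x\in B_{1}$ with $\mathcal{M}(|f|^{n})(\bar x)\le\epsilon_{2}$ one has $\int_{B_{1}}|f|^{n}\le\int_{B_{2}(\bar x)}|f|^{n}\le2^{n}\epsilon_{2}|B_{1}|$, whence
\begin{align*}
|T_{M}^{-}\cap B_{1}|\ge c(n,\lambda,\Lambda,\gamma)|B_{1}|-CM^{-n(1+\gamma)}\epsilon_{2}|B_{1}|.
\end{align*}
With $M=M(n,\lambda,\Lambda)$ fixed as in Step~2 and $\epsilon_{2}=\epsilon_{2}(n,\lambda,\Lambda,\theta_{0})$ chosen so small that the subtracted term is at most half the first, this yields $|B_{1}\cap T_{M}^{-}|\ge\Theta|B_{1}|$ with $\Theta=\Theta(n,\lambda,\Lambda)$; the threshold $\theta=\theta(n,\lambda,\Lambda,\theta_{0})<\min(\theta_{0},\Theta)$ is then taken small (only non-emptiness of the good contact set was used above, the passage to a genuine density being what makes the lemma iterable through the covering Lemma~\ref{covering}), and letting $\epsilon\to0$ removes the regularization. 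The hard part, as noted, is Step~2 --- the non-uniformity of the Hessian of a $C^{1,\alpha}$ cone and the failure of cone differences to be cones --- whereas Steps~1 and 3 are the standard ABP and area-formula bookkeeping adjusted to the degenerate operator by the single fact $\alpha(1+\gamma)=1$.
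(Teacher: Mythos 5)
Your Steps 1 and 3 are essentially the paper's \textbf{Step 3}: the contact identity $y_{0}=x_{0}+M^{-(1+\gamma)}|Du|^{\gamma}Du(x_{0})$, the factorization of $D\Phi$ through the positive semi-definite matrix $S=|Du|^{\gamma}D^{2}u+M^{1+\gamma}Q$, the Pucci/AM--GM bound $\det D\Phi\le C\bigl(1+M^{-n(1+\gamma)}|f|^{n}\bigr)$, and the area-formula conclusion all match the paper. But your Step 2 is not a proof: you state the goal (``for a suitable universal $M$, $x_{0}(y_{0})\in B_{1}$ for every $y_{0}\in V$''), correctly diagnose why it is hard (non-constant Hessian of the cone, the difference of two cones is not a cone, the need for a trichotomy in $|x_{0}-y_{0}|$), and then explicitly defer it (``This is the step I expect to be the main obstacle''). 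That step is the heart of the lemma and occupies most of the paper's proof. Concretely, what is missing is (i) the barrier construction: sliding bumps $\phi(x)=(|x|^{-p}-(3/4)^{-p})/p$ added to $P_{1}$ with vertices ranging over $B_{1/4}\cap B_{3/4}(x_{1})$, and a contradiction via the area formula and $\mathcal{M}(|f|^{n})(x_{1})\le\epsilon_{2}$, to produce a point $x_{2}\in B_{1/2}$ where $u-P_{1}\le C_{1}|y_{1}|^{-(1-\alpha)}$ (resp.\ $\le C_{2}$, resp.\ a fresh cone of universal opening with vertex in $B_{1/2}$, in the three regimes of $|y_{1}|$); and (ii) the analysis of $Q=P_{M}-P_{1}$: locating its unique critical point $y_{0}=(M^{1/\alpha}\tilde y-y_{1})/(M^{1/\alpha}-1)$, choosing $V$ so that $y_{0}$ lands near $x_{2}$, approximating $Q$ in $B_{1}$ by an elliptic paraboloid (large $|y_{1}|$) or by a single cone (bounded $|y_{1}|$) after taking $C_{0}$ and $M$ large, and the maximum-principle observation that $\max_{\mathbb{R}^{n}\setminus U}Q=\max_{\partial U}Q$ to rule out contact points escaping $B_{1}$. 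Without (i) you have no quantitative closeness of $u$ to $P_{1}$ at an interior point, and the mere inequality $u\ge Q_{0}$ with vertex possibly arbitrarily far away in $\overline{\Omega}$ does not localize the new contact points.

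A secondary gap: you invoke the area formula on the assertion ``$\Phi$ is Lipschitz,'' but the Hessian of the touching cone blows up at its vertex, so $Du$ is only Lipschitz on the contact set away from the fibers $x_{0}=y_{0}$, with constant of order $M\epsilon^{-1}$ on $\{|x_{0}-y_{0}|>\epsilon\}$. The paper handles this by working on the truncated sets $T^{-,\epsilon}_{M}(V)$ with mollified cones $P^{\epsilon}$, letting $\epsilon\to0$ at the end, and treating the diagonal set $\{x_{0}=y_{0}\}$ (where $\Phi$ is the identity) separately; your remark that this set ``contributes nothing'' should be replaced by that bookkeeping.
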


\begin{proof}
We assume that $u$ is semi-concave in $B_2$.
In general case, we can regularize $u$ to be semi-concave by using inf-convolution (See \cite{Li17, Caffarelli95}).
Let $0<\theta<1$ which will be chosen later.
Since $|B_{1} \cap T^{-}_{1}\cap \{x\in \Omega : \mathcal{M}(|f|^n)(x) \leq \epsilon_2\}| \geq \theta |B_{1}|$, there exists $\kappa=\kappa(n,\theta) \in (0,1)$ such that
$$B_{1-\kappa} \cap T^{-}_{1} \cap \{x\in \Omega : \mathcal{M}(|f|^n)(x) \leq \epsilon_2\} \neq \emptyset. $$
Let $x_{1} \in B_{1-\kappa} \cap T^{-}_{1} \cap \{x\in \Omega : \mathcal{M}(|f|^n)(x) \leq \epsilon_2\}$.
Then there exist a vertex $y_{1} \in \overline{\Omega}$ and a $C^{1,\alpha}$ cone $P_{1}$ such that
\begin{align} \label{P_1}
    P_{1}(x) = -\frac{1}{1+\alpha}|x-y_{1}|^{1+\alpha} + C \overset{x_{1}}{\leq} u \quad \text{ in } \Omega.
\end{align}
We divide the proof into the three cases according to the distance between $y_1$ and the origin.
Case 1 is $|y_{1}| \geq C_{0}$ for large $C_{0}$, Case 2 is $ 2 \leq |y_{1}| < C_{0}$, Case 3 is $|y_{1}| < 2$.
For each case, the proof is divided into several steps as in \cite{Li18}.

\textbf{Case 1.} $|y_{1}| \geq C_{0}$ for large $C_{0}$ to be determined later in \textbf{Step 2}.

\textbf{Step 1.} We prove that  there exist $x_2 \in B_{1/2}$ and $C_{1}=C_{1}(n,\lambda,\Lambda,\gamma)>1$ independent of $C_{0}$ such that
\begin{align} \label{Case1Step1}
    (u-P_{1})(x_2) \leq \frac{C_{1}}{|y_1|^{1-\alpha}}.
\end{align}
Consider the barrier function $\phi(x) := (|x|^{-p} - (3/4)^{-p})/p$ for $|x| > 1/4$ with large $p>0$ to be determined later and extend $\phi$ smoothly.
For each vertex point $y_V \in B_{1/4} \cap B_{3/4}(x_{1})$, we define the test function 
\begin{align*}
    \psi(x) := P_{1}(x) + \frac{1}{|y_1|^{1-\alpha}}\phi(x-y_V).
\end{align*}
Let $x_T \in \overline{B}_{3/4}(y_V)$ be a touching point such that
\begin{align} \label{Case1vis1}
    (u-\psi)(x_T)=\min_{\overline{B}_{3/4}(y_V)}(u-\psi).
\end{align}
Then we get $x_T \in B_{3/4}(y_V)$ since
\begin{align*}
    & (u-\psi)|_{\partial B_{3/4}(y_V)} \geq 0, \\
    & (u-\psi)(x_T) \leq (u-\psi)(x_1) = -\frac{1}{|y_0|^{1-\alpha}}\phi(x_1) <0.
\end{align*}
\textbf{Claim}: For small enough $\epsilon_2=\epsilon_2(n,\lambda,\Lambda,\gamma,\kappa)>0$, there exists a vertex point $y^{0}_V \in B_{1/4} \cap B_{3/4}(x_{1})$, such that a corresponding touching point $x^{0}_T$ is very close to vertex point, that is, $x^{0}_T \in B_{1/4}(y^0_V)$.

If we prove this claim, we get $x^{0}_T \in B_{1/2}$ since
$$|x^{0}_T| \leq |x^{0}_T-y^{0}_V| + |y^{0}_V| < \frac{1}{4} + \frac{1}{4} = \frac{1}{2}.$$
Hence we show \eqref{Case1Step1} by $x_2=x^{0}_T$ and $C_1 = \norm{\phi}_{L^{\infty}}$ since
$$u(x^{0}_T) < \phi(x^{0}_T) \leq P_{1}(x^{0}_T) + \frac{1}{|y_0|^{1-\alpha}}\norm{\phi}_{L^{\infty}}.$$
Suppose by contradiction that for all $y_V \in B_{1/4} \cap B_{3/4}(x_{1})$, the corresponding touching points are always in $x_T \in B_{3/4}(y_V) \setminus B_{1/4}(y_V)$. Since $|y_V| \leq 1/4$, we have
\begin{align} \label{Case1Propx_T}
    \frac{1}{4} <|x_T-y_V| < \frac{3}{4}, \quad |x_T| < 1.
\end{align}
We define $T$ by the set of corresponding touching points $x_T$, then $T \subset B_1$.

The proof of claim is divided into four minor steps.

\textbf{Step 1-1.}
From \eqref{Case1vis1}, we see that
$$ \psi + \min_{\overline{B}_{3/4}(y_V)}(u-\psi) \overset{x_T}{\leq} u \quad \text{ in } \overline{B}_{3/4}(y_V). $$
By the definition of viscosity solution, we obtain
\begin{align*}
    |D\psi(x_T)|^{\gamma} \mathcal{P}_{\lambda,\Lambda}^{-}(D^{2}\psi(x_T)) \leq f(x_T).
\end{align*}
A direct computation yields
\begin{align*}
    |D\psi(x_T)| = \left|-|x_T - y_1|^{\alpha} \frac{x_T - y_1}{|x_T - y_1|} -\frac{1}{|y_1|^{1-\alpha}|x_T-y_V|^{p+1}} \frac{x_T-y_V}{|x_T-y_V|}\right|.
\end{align*}
Note that since $|x_T| < 1$ and $|y_1|>C_0$, we have $|x_T - y_1|^{\alpha} > \left(1-\frac{1}{C_0}\right)|y_1|^{\alpha}$.
From \eqref{Case1Propx_T}, for large enough $C_{0}> 2(4^{p+1}+1)$, we get
\begin{align} \label{Case1Dpsi}
    |D\psi(x_T)| >  \left( 1-\frac{1}{C_0} \right) {|y_1|^{\alpha}} - \frac{4^{p+1}}{|y_1|^{1-\alpha}} > \frac{1}{2}|y_1|^{\alpha}.
\end{align}

A direct computation with \eqref{Case1Propx_T} also shows
\begin{align*}
\mathcal{P}^{-}(D^{2}\psi(x_T)) &\geq \mathcal{P}^{-}(D^{2}P_{1}(x_T)) + \frac{1}{|y_1|^{1-\alpha}}{P}^{-}(D^{2}\phi(x_T-y_V)) \\
 &= - \frac{\Lambda(n-1+\alpha)}{|x_T - y_1|^{1-\alpha}} +\frac{1}{|y_1|^{1-\alpha}|x_T-y_V|^{p+2}} ((p+1)\lambda - (n-1)\Lambda) \\
 &\geq \frac{1}{|y_1|^{1-\alpha}} \left(-2\Lambda(n-1+\alpha) + \left(\frac{4}{3}\right)^{p+2}((p+1)\lambda - (n-1)\Lambda)\right) \\
 &\geq \frac{2^\gamma}{|y_1|^{1-\alpha}}
\end{align*}
for large enough $p=p(n,\lambda,\Lambda,\gamma)>0$.

Since $\alpha \gamma = 1-\alpha$, we get
\begin{align*}
     1 \leq |D\psi(x_T)|^{\gamma} \mathcal{P}_{\lambda,\Lambda}^{-}(D^{2}\psi(x_T)) \leq f(x_T), \quad \forall x_T \in T.
\end{align*}
Finally, we want to show that $Du$ is Lipschitz on the set of a touching point $T$.
Recalling that $D^2\psi$ is bounded, it can be proved by using the same method in \cite{Li18}, and we will prove similar statement in \textbf{Step 3}. Thus we omit the proof.
Therefore, by the Radamacher theorem, $D^2u$ exists almost everywhere on $T$.

\textbf{Step 1-2.} For any $x_T \in T$, we have
\begin{align*}
    Du(x_T)= DP_1(x_T) + \frac{1}{|y_1|^{1-\alpha}}D\phi(x_T-y_V), \\
    D^{2}u(x_T) \geq D^{2}P_1(x_T) + \frac{1}{|y_1|^{1-\alpha}}D^{2}\phi(x_T-y_V).
\end{align*}
Let $v = u-P_1$. Since $Dv(x_T) = \frac{1}{|y_1|^{1-\alpha}}D\phi(x_T-y_V)$, we get
\begin{align*}
    y_V &= x_T - (D\phi)^{-1}(|y_1|^{1-\alpha} Dv(x_T)).
\end{align*}
Therefore, for each touching point $x_T$, the corresponding vertex $y_V$ is unique. By differentiating with respect to $x_T$, we have
\begin{align*}
    D_{x_T}y_V = I - D((D\phi)^{-1})(|y_1|^{1-\alpha}Dv(x_T)) \cdot |y_1|^{1-\alpha} D^2 v(x_T).
\end{align*}
By the inverse function theorem, we have $D((D\phi)^{-1}) \circ D\phi(x_T -y_V) = (D^2 \phi)^{-1}(x_T -y_V)$.
Combining this with $|y_1|^{1-\alpha}Dv(x_T) = D\phi(x_T-y_V)$, we get
\begin{align*}
    D_{x_T}y_V &= I - (D^2 \phi)^{-1}(x_T -y_V) \cdot |y_1|^{1-\alpha} D^2 v(x_T) \\
    &= (D^2 \phi)^{-1}(x_T -y_V) \cdot (D^2 \phi (x_T -y_V) - |y_1|^{1-\alpha} D^2 v(x_T)) \\
    &= D((D\phi)^{-1}) (|y_1|^{1-\alpha}Dv(x_T)) \cdot (D^2 \phi (x_T -y_V) - |y_1|^{1-\alpha}D^2 v(x_T)).
\end{align*}
A direct calculation shows that
\begin{align*}
    (D\phi)^{-1}(x) &= -\frac{1}{|x|^{\frac{1}{p+1}}}\frac{x}{|x|}, \\
    D((D\phi)^{-1})(x) &= -\frac{1}{|x|^{1+\frac{1}{p+1}}}\left(I-\left(1+\frac{1}{p+1}\right)\frac{x}{|x|} \otimes \frac{x}{|x|}\right).
\end{align*}
Therefore, we obtain
\begin{align*}
    D_{x_T}y_V = \frac{1}{|y_1|^{\frac{1-\alpha}{p+1}}{|Dv|^{1+\frac{1}{p+1}}}} \left(I-\left(1+\frac{1}{p+1}\right) \frac{Dv}{|Dv|} \otimes \frac{Dv}{|Dv|}\right)\left(D^{2}v(x_T)-\frac{1}{|y_1|^{1-\alpha}}D^{2}\phi(x_T-y_V)\right).
\end{align*}
Note that $|Dv(x_T)| \neq 0$ since it follows from \eqref{Case1Propx_T} that
\begin{align} \label{Case1Dv}
    |Dv(x_T)| = \frac{1}{|y_1|^{1-\alpha}}|D\phi(x_T-y_V)| = \frac{1}{|y_1|^{1-\alpha}|x_T-y_V|^{p+1}} 
    \geq \left( \frac{4}{3} \right)^{p+1}\frac{1}{|y_1|^{1-\alpha}} > 0.
\end{align}
Consequently, since $\det\left(I-\left(1+\frac{1}{p+1}\right) \frac{Dv}{|Dv|}\otimes \frac{Dv}{|Dv|}\right) = -\frac{1}{p+1}$, we have
\begin{align*}
    \det D_{x_T}y_V = -\frac{1}{p+1}\left(\frac{1}{|Dv|^{1+\frac{1}{p+1}}|y_1|^{\frac{1-\alpha}{p+1}}}\right)^{n} \det\left(D^{2}v(x_T)-\frac{1}{|y_1|^{1-\alpha}}D^{2}\phi(x_T-y_V)\right).
\end{align*}

\textbf{Step 1-3.} We want to find a bound for $\det\left(D^{2}v(x_T)-\frac{1}{|y_1|^{1-\alpha}}D^{2}\phi(x_T-y_V)\right)$ by using \eqref{subPDE}.
By properties of the Pucci operators, we see that
\begin{align*}
    |Du|^{\gamma} \mathcal{P}^{-}(D^{2}u - D^{2}\psi) +|Du|^{\gamma} \mathcal{P}^{-}(D^{2}\psi) \leq |Du|^{\gamma} \mathcal{P}^{-}(D^{2}u) \leq f.
\end{align*}
From \textbf{Step 1-1}, we have $Du(x_T) = D\psi(x_T)$, $|D\psi(x_T)|^{\gamma} \mathcal{P}^{-}(D^{2}\psi(x_T)) \geq 1$. Thus,
\begin{align*}
    |Du(x_T)|^{\gamma} \mathcal{P}^{-}(D^{2}u - D^{2}\psi)(x_T) \leq f(x_T).
\end{align*}
Since $D^{2}u(x_T) - D^{2}\psi(x_T) = D^{2}v(x_T)-\frac{1}{|y_1|^{1-\alpha}}D^{2}\phi(x_T-y_V) \geq 0$, we get
\begin{align*}
    \lambda \tr(D^{2}u(x_T) - D^{2}\psi(x_T)) = \mathcal{P}^{-}(D^{2}u(x_T) - D^{2}\psi(x_T)) \leq \frac{f(x_T)}{|Du(x_T)|^{\gamma}}
\end{align*}
and using the fact that $\det A \leq \left(\frac{\tr A}{n}\right)^n$ for any positive semi-definite matrix $A \geq 0$, we deduce that
\begin{align*}
    \det\left(D^{2}v(x_T)-\frac{1}{|y_1|^{1-\alpha}}D^{2}\phi(x_T-y_V)\right) \leq \left(\frac{\tr(D^{2}u(x_T) - D^{2}\psi(x_T))}{n}\right)^{n} \leq C\left(\frac{f(x_T)}{|Du(x_T)|^{\gamma}}\right)^{n}.
\end{align*}
From \eqref{Case1Dpsi} and \eqref{Case1Dv}, we have $ |Du(x_T)| = |D\psi(x_T)| \geq \frac{1}{2}|y_1|^{\alpha}$ and $|Dv(x_T)| \geq \frac{C}{|y_1|^{1-\alpha}}$. Thus, we conclude that
\begin{align*}
    |\det D_{x_T}y_V| &\leq C \left|\frac{f(x_T)}{|y_1|^{\frac{1-\alpha}{p+1}}|Dv(x_T)|^{1+\frac{1}{p+1}}|Du(x_T)|^{\gamma}}\right|^{n}\\
    &\leq C\left|\frac{f(x_T)}{|y_1|^{\frac{1-\alpha}{p+1}}|y_1|^{-(1-\alpha)(1+\frac{1}{p+1})}|y_1|^{\alpha \gamma}}\right|^{n}\\
    &= C|f(x_T)|^{n}.
\end{align*}

\textbf{Step 1-4.} Consider the mapping $m : T \rightarrow B_{1/4} \cap B_{3/4}(x_{1})$ defined by $m(x_T) =y_V$. Then by the area formula,
\begin{align*}
    |B_{1/4} \cap B_{3/4}(x_{1})| \leq \int_{T}|\det D_{x_T}y_V| dx_T \leq C \int_{T} |f(x_T)|^{n} dx_T.
\end{align*}
Since $B_{1/4} \cap B_{3/4}(x_{1})$ contains a ball of radius $\kappa/2$ and $T \subset B_1 \subset B_2(x_1)$, we conclude that
\begin{align*}
    \left( \frac{\kappa}{2} \right)^{n}|B_1| &\leq C \int_{B_2(x_1)} |f(x_T)|^{n} dx_T\\
    &\leq C\mathcal{M}(|f|^n)(x_1)|B_2(x_1)| \\
    &\leq C\epsilon_2 2^n |B_1|,
\end{align*}
which makes contradiction for small enough $\epsilon_{2} < 4^{-n}C^{-1}\kappa^n$.
This proves the claim and therefore \textbf{Step 1}.\\

\textbf{Step 2.} We want to find universal constants $M_1=M_1(n,\lambda,\Lambda, \gamma)>1$, $C_0=C_0(n,\lambda,\Lambda, \gamma)>2$, and a set $V_1 \subset \Omega$ with universal positive measure such that $T^{-}_{M_1}(V_1) \subset B_1$.

For each $\tilde{x} \in T^{-}_{M_1}(V_1)$, there exists $\tilde{y} \in V_1$ such that
\begin{align} \label{Case1P_M}
    P_{M_1}(x) = -\frac{M_1}{1+\alpha}|x-\tilde{y}|^{1+\alpha} + C \overset{\tilde{x}}{\leq} u \quad \text{ in } \Omega.
\end{align}
Consider the function $Q$ defined by the difference of two $C^{1,\alpha}$ cones,
\begin{align*}
    Q(x)& := P_{M_1}(x)- P_1(x)\\
    &= -\frac{M_1}{1+\alpha}|x-\tilde{y}|^{1+\alpha} +-\frac{1}{1+\alpha}|x-y_1|^{1+\alpha} +C'.
\end{align*}
By using \eqref{Case1Step1}, \eqref{P_1} and \eqref{Case1P_M}, we get
\begin{align*}
    Q(\tilde{x}) &=P_{M_1}(\tilde{x})- P_1(\tilde{x}) = u(\tilde{x}) - P_1(\tilde{x}) \geq 0, \\
    Q(x_2) & = P_{M_1}(x_2)- P_1(x_2)\leq P_{M_1}(x_2) -u(x_2) + \frac{C_{1}}{|y_1|^{1-\alpha}} \leq \frac{C_{1}}{|y_1|^{1-\alpha}}.
\end{align*}
Therefore, we deduce that
\begin{align} \label{Case1Q}
    Q(x_2) \leq Q(\tilde{x}) + \frac{C_{1}}{|y_1|^{1-\alpha}}.
\end{align}
Using the fact that $x_2 \in B_{1/2}$, we want to show $\tilde{x} \in B_1$ which implies $T^{-}_{M_1}(V_1) \subset B_1$.

Heuristically, the idea is consider $Q$ as cone-like and choose $V_1$ such that vertex of $Q$ is close to $x_2$.
Then we may show that $\tilde{x}$ is also close to vertex of $Q$ by using \eqref{Case1Q} and therefore we conclude that $\tilde{x} \in B_1$ by using $x_2 \in B_{1/2}$ and closeness of vertex and $x_2$.

Hence, we first find the vertex of $Q$.
It seems reasonable that we call a point vertex of $Q$ if $DQ$ is 0 only at that point.
A direct computation shows
\begin{align*}
    DQ(x) = -M_1|x-\tilde{y}|^{\alpha}\frac{x-\tilde{y}}{|x-\tilde{y}|} + |x-y_1|^{\alpha}\frac{x-y_1}{|x-y_1|}.
\end{align*}
If $DQ(x)=0$, then $x-\tilde{y}$ and $x-y_1$ is parallel, which implies that a vertex should be on the line $\overleftrightarrow{\tilde{y} y_1}$.
Therefore, a vertex $y_0$ which satisfies $DQ(y_0)=0$ is only one point,
\begin{align*}
    y_0 = \frac{M_1^{1/\alpha} \tilde{y} - y_1}{M_1^{1/\alpha}-1}.
\end{align*}
Therefore, we choose $V_1 \subset \mathbb{R}^n$ as
$$V_1 = \overline{B}_{\frac{\sqrt{\alpha}}{8}\frac{M_1^{1/\alpha}-1}{M_1^{1/\alpha}}} \left( \frac{1}{M_1^{1/\alpha}}y_1 + \frac{M_1^{1/\alpha}-1}{M_1^{1/\alpha}}x_2 \right).$$ 
Then $y_0 \in B_{\sqrt{\alpha} / 8}(x_2)$ if $\tilde{y} \in V_1$. 
Moreover, since $y_1 \in \overline{\Omega}$, $B_{\sqrt{\alpha}/8}(x_2) \subset B_1 \subset \Omega$, and $\Omega$ is convex, we have $V_1 \subset \Omega$.

\textbf{Claim}: For large $M_1=M_1(n,\lambda,\Lambda, \gamma)>1$ and $C_0=C_0(n,\lambda,\Lambda, \gamma)>2$, we have $\tilde{x} \in B_{1/4}(y_0)$.

Once we have proved the claim, we conclude that
$$ |\tilde{x}| \leq |\tilde{x} -y_0| + |y_0 - x_2| +|x_2| \leq \frac{1}{4} + \frac{\sqrt{\alpha}}{8} + \frac{1}{2} \leq \frac{7}{8}.$$
Therefore, we have $\tilde{x} \in B_1$ which proves $T^{-}_{M_1}(V_1) \subset B_1$.

To do this, we assume $y_0=0$ by translation. Then we only need to prove $|\tilde{x}| \leq \frac{1}{4}$. We have $|x_2| < \frac{\sqrt{\alpha}}{8}$ and $\tilde{y} = \frac{1}{M_1^{1/{\alpha}}} y_1$. Note that $Q$ is changed into
\begin{align*}
    Q(x) = -\frac{M_1}{1+\alpha}\left|x-\frac{1}{M_1^{1/{\alpha}}} y_1 \right|^{1+\alpha} +\frac{1}{1+\alpha}|x-y_1|^{1+\alpha} + C'.
\end{align*}
Define $a_Q$ to be
\begin{align*}
    a_Q &:= \sup_{x \in \mathbb{R}^n}\left\{ |x| : Q(x) \geq \min_{B_{\sqrt{\alpha} / 8}} Q - \frac{C_{1}}{|y_1|^{1-\alpha}}\right\}.
\end{align*}
Note that $a_Q = a_{Q+C}$ for any constant $C$. Thus we assume $Q(0) =0$ by adding some constant. Observe that $|\tilde{x}| \leq a_Q$ since $|x_2| < \sqrt{\alpha} / 8$ and
\begin{align*}
    Q(\tilde{x}) \geq Q(x_2) - \frac{C_{1}}{|y_1|^{1-\alpha}}
    \geq \min_{B_{\sqrt{\alpha} / 8}} Q - \frac{C_{1}}{|y_1|^{1-\alpha}}.
\end{align*}
Therefore, we only need to show $a_Q \leq 1/4$. However, it is not easy to find $a_Q$ since we do not know what $Q$ looks like.
Instead, if we choose sufficiently large $C_0$, we can approximate the Hessian of $Q$, and then approximate $Q$ by an elliptic paraboloid in $B_1$.
A direct computation shows
\begin{align*}
    D^2Q(x) & = -\frac{M_1}{|x-\frac{1}{M_1^{1/{\alpha}}} y_1|^{1-\alpha}}\left(I + (\alpha -1)\frac{x-\frac{1}{M_1^{1/{\alpha}}} y_1}{|x-\frac{1}{M_1^{1/{\alpha}}} y_1|} \otimes \frac{x-\frac{1}{M_1^{1/{\alpha}}} y_1}{|x-\frac{1}{M_1^{1/{\alpha}}} y_1|}\right) \\
    & \quad +\frac{1}{|x-y_1|^{1-\alpha}}\left(I + (\alpha -1)\frac{x-y_1}{|x-y_1|} \otimes \frac{x-y_1}{|x-y_1|}\right).
\end{align*}
Since $|x| < 1$, the direction of $\frac{x-y_1}{|x-y_1|}$ can be approximated to the direction of $-\frac{y_1}{|y_1|}$ for large $|y_1|>C_0$, and $\left(1-\frac{1}{C_0}\right)^{1-\alpha}|y_1|^{1-\alpha}<|x-y_1|^{1-\alpha}< \left(1+\frac{1}{C_0}\right)^{1-\alpha}|y_1|^{1-\alpha}$. 
Therefore, for large $C_0=C_0(n,\gamma,\lambda,\Lambda, M_1)$,
\begin{align*}
    D^2Q(x) &\underset{C_0}{\approx} -\left(\frac{M_1}{|\frac{1}{M_1^{1/{\alpha}}} y_1|^{1-\alpha}} - \frac{1}{|y_1|^{1-\alpha}}\right) \left(I+(\alpha -1) \frac{y_1}{|y_1|} \otimes \frac{y_1}{|y_1|}\right)\\
    &= -\frac{M_1^{1/\alpha}-1}{|y_1|^{1-\alpha}}\left(I+(\alpha -1) \frac{y_1}{|y_1|} \otimes \frac{y_1}{|y_1|}\right) \quad\text{in}\ B_{1}.
\end{align*}
Here, $A \underset{C_0}{\approx} B$ means there exists $\epsilon=\epsilon(C_0)>0$ such that $\lim_{C_0 \rightarrow \infty} \epsilon (C_0)=0$ and $(1-\epsilon)A \leq B \leq (1+\epsilon)A$. 
Since $Q(0)=0$ and $DQ(0)=0$, $Q$ can be approximated by the elliptic paraboloid with opening degeneracy $1/|y_1|^{1-\alpha}$ in $B_1$,
\begin{align*}
    Q(x) \underset{C_0}{\approx}-\frac{M_1^{1/\alpha}-1}{2|y_1|^{1-\alpha}} \left( |x'|^{2} + \alpha|x_{y_1}|^{2} \right) \quad\text{in}\ B_{1},
\end{align*}
where $x_{y_1}$ is a coordinate parallel to $y_1$ and $x'$ is the other coordinates perpendicular to $y_1$.

Therefore, we can compute $\min_{B_{\frac{\sqrt{\alpha}}{8}}} Q$ as
\begin{align*}
    \min_{B_{\frac{\sqrt{\alpha}}{8}}} Q \geq -\frac{M_1^{1/\alpha}-1}{2|y_1|^{1-\alpha}}\left(\frac{\sqrt{\alpha}}{8}\right)^2.
\end{align*}
Consequently, we have
\begin{align*}
    \min_{B_{\frac{\sqrt{\alpha}}{8}}} Q - \frac{C_{1}}{|y_1|^{1-\alpha}} &\geq -\frac{M_1^{1/\alpha}-1}{2|y_1|^{1-\alpha}}\left(\frac{\sqrt{\alpha}}{8}\right)^2 - \frac{C_{1}}{|y_1|^{1-\alpha}}\\
    &\geq -\frac{M_1^{1/\alpha}-1}{2|y_1|^{1-\alpha}}\left(\frac{\sqrt{\alpha}}{4}\right)^2,
\end{align*}
where we choose large $M_1=M_1(n, \gamma, C_1)$ so that $\frac{M_1^{1/\alpha}-1}{2}\left(\left(\frac{\sqrt{\alpha}}{4}\right)^2 - \left(\frac{\sqrt{\alpha}}{8}\right)^2\right) \geq C_1$.
Therefore,
\begin{align*}
    a_Q &= \sup\left\{ |x| : Q(x) \underset{C_0}{\approx} -\frac{M_1^{1/\alpha}-1}{2|y_1|^{1-\alpha}} \left( |x'|^{2} + \alpha|x_{y_1}|^{2} \right) \geq \min_{B_{\frac{\sqrt{\alpha}}{8}}} Q - \frac{C_{1}}{|y_1|^{1-\alpha}}\right\} \\
    &\leq \sup\left\{ |x| : -\frac{M_1^{1/\alpha}-1}{2|y_1|^{1-\alpha}} \left( |x'|^{2} + \alpha|x_{y_1}|^{2} \right) \geq -\frac{M_1^{1/\alpha}-1}{2|y_1|^{1-\alpha}}\left(\frac{\sqrt{\alpha}}{4}\right)^2 \right\} =\frac{1}{4}.
\end{align*}
By using this method, we want to say $a_Q \leq \frac{1}{4}$.
However, we can approximate $Q$ only in $B_1$ so there is a possibility that $Q$ increases outside of $B_1$ and $a_Q \gg 1$.

\textbf{Claim 2}: For any open set $U\subset \mathbb{R}^n$ with the vertex $0 \in U$, 
\begin{align} \label{cone}
    \max_{\mathbb{R}^{n} \setminus U} Q = \max_{\partial U} Q.
\end{align}

Once we have proved \textbf{Claim 2}, then we can consider $Q$ only in $B_1$  since $\max_{\mathbb{R}^{n} \setminus B_1} Q = \max_{\partial B_1} Q$. Therefore $a_Q \leq 1/4$, which proves the claim and therefore $T^{-}_{M_1}(V) \subset B_1$.

To prove \textbf{Claim 2}, we first recall that
\begin{align} \label{cone2}
    DQ=0 \text{ at only one point } 0, \quad \lim_{|x| \rightarrow \infty} Q(x) = -\infty.
\end{align}
Since it is trivial that $\max_{\mathbb{R}^{n} \setminus U} Q \geq \max_{\partial U} Q$, we prove only $\max_{\mathbb{R}^{n} \setminus U} Q \leq \max_{\partial U} Q$.
Suppose by contradiction that there exists a point $x_0 \in \mathbb{R}^{n} \setminus \overline{U}$ such that $Q(x_0) > \max_{\partial U} Q$.
Then consider the set 
$$A := \left\lbrace x \in \mathbb{R}^{n} \setminus \overline{U} : Q(x) > \max_{\partial U} Q \right\rbrace. $$ 
Note that $A$ contains $x_0$ and open.
Since $\lim_{|x| \rightarrow \infty} Q(x) = -\infty$, we know that $A$ is bounded.
Thus, $\overline{A} \subset \mathbb{R}^{n} \setminus {U}$ is also bounded and closed.
By the extreme value theorem, there exists $y_0 \in \overline{A}$ such that $Q(y_0) = \max_{y \in \overline{A}}Q(y) $.
However, since $Q(y_0) \geq Q(x_0) > \max_{\partial U} Q$, we know $y_0 \notin \partial U$ and $y_0 \notin \{ x \in \mathbb{R}^{n} \setminus \overline{U} : Q(x) = \max_{\partial U} Q \}$.
Consequently, $y_0 \in A$ and $DQ(y_0) =0$, but $DQ = 0$ at only one point $0\in U$ and $y_0 \notin U$, which makes contradiction.
This proves \textbf{Claim 2} and finishes \textbf{Step 2}.

\textbf{Case 2.} $2 \leq |y_{1}| < C_{0}$

\textbf{Step 1.} As in \textbf{Case 1}, we show that there exist $x_2 \in B_{1/2}$ and $C_2=C_2(n,\lambda,\Lambda)$ such that
\begin{align} \label{Case2Step1}
    (u-P_1)(x_2) \leq C_2.
\end{align}
Consider the barrier function $\phi(x) := (|x|^{-p} - (3/4)^{-p})/p$ in the region $\{ |x| > 1/4 \}$ with large $p$ to be determined later and extend $\phi$ smoothly. For large $\tilde{C_2}$ and each vertex point $y_V \in B_{1/4} \cap B_{3/4}(x_{1})$, we define the test function 
\begin{align*}
    \psi(x) := P_{1}(x) + \tilde{C_2} \phi(x-y_V).
\end{align*}
Let $x_T \in \overline{B}_{3/4}(y_V)$ be a touching point such that
\begin{align} \label{Case2vis1}
    (u-\psi)(x_T)=\min_{\overline{B}_{3/4}(y_V)}(u-\psi).
\end{align}
Then we get $x_T \in B_{3/4}(y_V)$ since
\begin{align*}
    & (u-\psi)|_{\partial B_{3/4}(y_V)} \geq 0, \\
    & (u-\psi)(x_T) \leq (u-\psi)(x_1) = -\tilde{C_2}\phi(x_1) <0.
\end{align*}
\textbf{Claim}: For large enough $\tilde{C_2} =\tilde{C_2}(n,\lambda,\Lambda,\gamma)>1$ and small enough $\epsilon_2=\epsilon_2(n,\lambda,\Lambda,\gamma,\kappa)>0$, there exists a vertex point $y^{0}_V \in B_{1/4} \cap B_{3/4}(x_{1})$ such that a corresponding touching point $x^{0}_T$ is close to vertex point, $x^{0}_T \in B_{1/4}(y^0_V)$.

If we prove the \textbf{Claim}, we get $x^{0}_T \in B_{1/2}$ since
$$|x^{0}_T| \leq |x^{0}_T-y^{0}_V| + |y^{0}_V| < \frac{1}{4} + \frac{1}{4} = \frac{1}{2}.$$
Hence we show \eqref{Case2Step1} by $x_2=x^{0}_T$ and $C_2 =\tilde{C_2} \norm{\phi}_{L^{\infty}}$ since
$$u(x^{0}_T) < \phi(x^{0}_T) \leq P_{1}(x^{0}_T) + \tilde{C_2}\norm{\phi}_{L^{\infty}}.$$
Suppose by contradiction that for all $y_V \in B_{1/4} \cap B_{3/4}(x_{1})$, the corresponding touching points are always in $x_T \in B_{3/4}(y_V) \setminus B_{1/4}(y_V)$. Then we have
\begin{align} \label{Case2Propx_T}
    \frac{1}{4} <|x_T-y_V| < \frac{3}{4}, \quad |x_T| < 1.
\end{align}
We define $T$ by the set of corresponding touching points $x_T$, then $T \subset B_1$.

The proof of \textbf{Claim} is divided into four minor steps.

\textbf{Step 1-1.}
From \eqref{Case2vis1}, we see that
$$ \psi + \min_{\overline{B}_{3/4}(y_V)}(u-\psi) \overset{x_T}{\leq} u \quad \text{ in } \overline{B}_{3/4}(y_V). $$
By the definition of viscosity solution, we obtain
\begin{align*}
    |D\psi(x_T)|^{\gamma} \mathcal{P}_{\lambda,\Lambda}^{-}(D^{2}\psi(x_T)) \leq f(x_T).
\end{align*}
A direct computation with \eqref{Case2Propx_T} shows
\begin{align} \label{Case2Dpsi}
    \nonumber |D\psi(x_T)| &= \left|-|x_T - y_1|^{\alpha} \frac{x_T - y_1}{|x_T - y_1|} -\frac{\tilde{C_2}}{|x_T-y_V|^{p+1}} \frac{x_T-y_V}{|x_T-y_V|}\right| \\
    &\geq \tilde{C_2}   \left(\frac{4}{3}\right)^{p+1} - (C_0 + 1)^{\alpha} \ \geq 1,
\end{align}
where $\tilde{C_2}(p,C_0)$ is large enough.
Note that $1 < |x_T-y_1|$ since $|y_1| >2, x_T \in B_1$.
Also,
\begin{align*}
\mathcal{P}^{-}(D^{2}\psi(x_T)) &\geq \mathcal{P}^{-}(D^{2}P_{1}(x_T)) + \tilde{C_2}{P}^{-}(D^{2}\phi(x_T-y_V)) \\
 &= - \frac{\Lambda(n-1+\alpha)}{|x_T - y_1|^{1-\alpha}} +\frac{\tilde{C_2}}{|x_T-y_V|^{p+2}}((p+1)\lambda - (n-1)\Lambda) \\
 &\geq -\Lambda(n-1+\alpha) + \tilde{C_2} \left(\frac{4}{3}\right)^{p+2}((p+1)\lambda - (n-1)\Lambda) \ \geq 1
\end{align*}
for large enough $p=p(n,\lambda,\Lambda,\gamma)$.
Hence, we have
\begin{align*}
    1 \leq |D\psi(x_T)|^{\gamma} \mathcal{P}_{\lambda,\Lambda}^{-}(D^{2}\psi(x_T)) \leq f(x_T), \quad \forall x_T \in T.
\end{align*}
Finally, note that $Du$ is Lipschitz on touching point $T$ by recalling that $D^2\psi$ is bounded and using the method in \cite{Li18}. Therefore, by Radamacher theorem, $D^2u$ exists almost everywhere on $T$.\\

\textbf{Step 1-2.} For any $x_T \in T$, we have
\begin{align*}
    Du(x_T)= DP_1(x_T) + \tilde{C_2} D\phi(x_T-y_V), \\
    D^{2}u(x_T) \geq D^{2}P_1(x_T) + \tilde{C_2} D^{2}\phi(x_T-y_V).
\end{align*}
Let $v = Du-DP_1$. Then we obtain
\begin{align*}
     y_V &= x_T - (D\phi)^{-1}( \tilde{C_2}^{-1}Dv(x_T)).
\end{align*}
By differentiating with respect to $x_T$ and using the inverse function theorem, we get
\begin{align*}
    D_{x_T}y_V = \frac{\tilde{C_2}^{\frac{1}{p+1}}}{{|Dv|^{1+\frac{1}{p+1}}}} \left(I-\left(\frac{1}{p+1} + 1\right) \frac{Dv}{|Dv|} \otimes \frac{Dv}{|Dv|}\right)(D^{2}v(x_T)-\tilde{C_2} D^{2}\phi(x_T-y_V)).
\end{align*}
Consequently, we have
\begin{align*}
    \det D_{x_T}y_V = -\frac{1}{p+1}\left(\frac{\tilde{C_2}^{\frac{1}{p+1}}}{|Dv|^{1+\frac{1}{p+1}}}\right)^{n} \det(D^{2}v(x_T)- \tilde{C_2} D^{2}\phi(x_T-y_V)).
\end{align*}
\textbf{Step 1-3.} We want to find a bound for $\det(D^{2}v(x_T)-\tilde{C_2} D^{2}\phi(x_T-y_V))$ by using \eqref{subPDE}. Note that
\begin{align*}
     |Du|^{\gamma} \mathcal{P}^{-}(D^{2}u - D^{2}\psi) +|Du|^{\gamma} \mathcal{P}^{-}(D^{2}\psi) \leq |Du|^{\gamma} \mathcal{P}^{-}(D^{2}u) \leq f.
\end{align*}
From \textbf{Step 1-1}, we have $Du(x_T) = D\psi(x_T)$, $|D\psi(x_T)|^{\gamma} \mathcal{P}^{-}(D^{2}\psi(x_T)) \geq 0$, and
\begin{align*}
    |Du(x_T)|^{\gamma} \mathcal{P}^{-}(D^{2}u - D^{2}\psi)(x_T) \leq f(x_T).
\end{align*}
Since $D^{2}u(x_T) - D^{2}\psi(x_T) = D^{2}v(x_T)-\tilde{C_2} D^{2}\phi(x_T-y_V) \geq 0$, we get
\begin{align*}
    & \lambda \tr(D^{2}u(x_T) - D^{2}\psi(x_T)) = \mathcal{P}^{-}(D^{2}u(x_T) - D^{2}\psi(x_T)) \leq \frac{f(x_T)}{|Du(x_T)|^{\gamma}},\\
    & \det(D^{2}v(x_T)-\tilde{C_2} D^{2}\phi(x_T-y_V)) \leq \left(\frac{\tr(D^{2}u(x_T) - D^{2}\psi(x_T))}{n}\right)^{n} \leq C\left(\frac{f(x_T)}{|Du(x_T)|^{\gamma}}\right)^{n}.
\end{align*}
From \eqref{Case2Dpsi}, we also have $ |Du(x_T)| = |D\psi(x_T)| >1$, $|Dv(x_T)| = \tilde{C_2}|D\phi(x_T-y_V)| > C \tilde{C_2}$. Thus, we deduce that
\begin{align*}
    |\det D_{x_T}y_V| &\leq C \left|\frac{f(x_T)}{|Dv(x_T)|^{1+\frac{1}{p+1}}|Du(x_T)|^{\gamma}}\right|^{n}\\
    &\leq C|f(x_T)|^{n}.
\end{align*}

\textbf{Step 1-4.} Consider $m : T \rightarrow B_{1/4} \cap B_{3/4}(x_{1})$ by $m(x_T) =y_V$. Then by area formula,
\begin{align*}
    |B_{1/4} \cap B_{3/4}(x_{1})| \leq \int_{T}|\det D_{x_T}y_V| dx_T \leq C \int_{T} |f(x_T)|^{n} dx_T.
\end{align*}
Since $B_{1/4} \cap B_{3/4}(x_{1})$ contains a ball of radius $\kappa/2$ and $T \subset B_1 \subset B_2(x_1)$, we can conclude that
\begin{align*}
    \left( \frac{\kappa}{2} \right)^{n}|B_1| &\leq C \int_{B_2(x_1)} |f(x_T)|^{n} dx_T\\
    &\leq C\mathcal{M}(|f|^n)(x_1)|B_2(x_1)| \\
    &\leq C\epsilon_2 2^n |B_1|,
\end{align*}
which makes contradiction for small enough $\epsilon_{2} < 4^{-n}C^{-1}\kappa^n$. This proves the \textbf{Claim} and finishes \textbf{Step 1}.\\

\textbf{Step 2.} As in \textbf{Case 1}, we want to find universal $M_2=M_2(n,\lambda,\Lambda,\gamma)>1$ and $V_2 \subset \Omega$ with positive measure such that $T^{-}_{M_2}(V_2) \subset B_1$.

For each $\tilde{x} \in T^{-}_{M_2}(V_2)$, there exists $\tilde{y} \in V_2$ such that
\begin{align*}
    P_{M_2}(x) = -\frac{M_2}{1+\alpha}|x-\tilde{y}|^{1+\alpha} + C \overset{\tilde{x}}{\leq} u \quad \text{ in } \Omega.
\end{align*}
Consider the cone-like $Q$ defined by
\begin{align*}
    Q(x)& = P_{M_2}(x)- P_1(x)\\
    &= -\frac{M_2}{1+\alpha}|x-\tilde{y}|^{1+\alpha} + \frac{1}{1+\alpha}|x-y_1|^{1+\alpha} +C'.
\end{align*}
Then, as in \textbf{Case 1}, we have
\begin{align*}
    Q(x_2) \leq Q(\tilde{x}) + C_{2}.
\end{align*}

We choose $V_2 \subset \Omega$ as
$$V_2 = \overline{B}_{\frac{1}{8}\frac{M_2^{1/\alpha}-1}{M_2^{1/\alpha}}} \left(\frac{1}{M_2^{1/\alpha}}y_1 + \frac{M_2^{1/\alpha}-1}{M_2^{1/\alpha}}x_2 \right).$$
Then vertex of $Q$ is in $y_0 \in B_{1/8}(x_2)$ if $\tilde{y} \in V_2.$

\textbf{Claim}: For large $M_2$ only depend on $n, \lambda, \Lambda$, we have $\tilde{x} \in B_{1/4}(y_0)$.

We assume $y_0=0$ and $Q(0) = 0$ by translation and adding some constant. Then we only need to prove $|\tilde{x}| \leq \frac{1}{4}$.
Note that we have $|x_2| < \frac{1}{8}$, $\tilde{y} = \frac{1}{M^{1/{\alpha}}} y_1$, and
\begin{align*}
    Q(x) = -\frac{M_2}{1+\alpha} \left|x-\frac{1}{M_2^{1/{\alpha}}} y_1 \right|^{1+\alpha} +\frac{1}{1+\alpha}|x-y_1|^{1+\alpha} + C'.
\end{align*}
Define $a_{Q}$ by,
\begin{align*}
    a_{Q} = \sup_{x \in \mathbb{R}^n}\{ |x| : Q(x) \geq \min_{B_{1/8}} Q - C_2\}.
\end{align*}
Then, it is enough to show $a_{Q} \leq 1/4$.
We consider
\begin{align*}
    \tilde{Q}(x) = \frac{Q(x)}{M_2} = -\frac{1}{1+\alpha}\left|x-\frac{1}{M_2^{1/{\alpha}}} y_1 \right|^{1+\alpha} +\frac{1}{M_2(1+\alpha)}|x-y_1|^{1+\alpha} + \frac{C'}{M_2}.
\end{align*}
Since $|y_1| \leq C_0$, $\tilde{Q}(0)=0$ and $D\tilde{Q}(0) = 0$, if we choose large $M_2 = M_2(C_0,\gamma)$, then $\tilde{Q}$ can be approximated by the $C^{1,\alpha}$ cone in $B_1$,
\begin{align*}
    \tilde{Q}(x) \, \longrightarrow \, -\frac{1}{1+\alpha}|x|^{1+\alpha} \quad\text{uniformly in}\ B_{1}, \text{ as } M_2 \rightarrow \infty.
\end{align*}
Therefore, we have
\begin{align*}
    \min_{B_{\frac{1}{8}}} \tilde{Q} &\leq -\frac{1}{1+\alpha} \left( \frac{1}{8} \right)^{1+\alpha}, \\
    a_{\tilde{Q}} &= \sup\left\{ |x| : -\frac{1}{1+\alpha}|x|^{1+\alpha} \geq -\frac{1}{1+\alpha}\left(\frac{1}{8}\right)^{1+\alpha} - \frac{C_2}{M_2}\right\}\\
    &\leq \sup\left\{ |x| : -\frac{1}{1+\alpha}|x|^{1+\alpha} \geq -\frac{1}{1+\alpha}\left(\frac{1}{4}\right)^{1+\alpha}\right\} =\frac{1}{4}
\end{align*}
for $M_2 = M_2(C_0,C_2,\gamma)$ large enough so that $-\frac{1}{1+\alpha}\left(\frac{1}{8}\right)^{1+\alpha} - \frac{C_2}{M_2} \geq -\frac{1}{1+\alpha}\left(\frac{1}{4}\right)^{1+\alpha}$.
Since $\tilde{Q}$ satisfies \eqref{cone}, we do not need to consider $\tilde{Q}$ outside of $B_1$.
Consequently, $a_Q \leq 1/4$, which prove $T^{-}_{M_2}(V_2) \subset B_1$.

\textbf{Case 3.} $|y_{1}| < 2$

\textbf{Step 1.} We prove that there exist a concave $C^{1,\alpha}$ cone $P_{\tilde{M}_3}$ with opening $\tilde{M}_3 = \tilde{M}_3(n,\lambda,\Lambda,\gamma)>0$ and vertex $y_2 \in B_1$ touching $u$ from below at $x_2 \in B_{1/2}$,
\begin{align*}
    P_{\tilde{M}_3}(x) = -\frac{\tilde{M}_3}{1+\alpha}|x-y_2|^{1+\alpha} +C \overset{x_2}{\leq} u(x)  \quad \text{ in } \Omega.
\end{align*}
Consider the barrier function $\phi(x):=(|x|^{-p} - (3/4)^{-p})/p$ for $|x| > 1/4$ with large $p$ and extend $\phi$ so that it is smooth, radially symmetric and decreasing.
For large $C_3$ and each vertex point $y_V \in B_{1/4} \cap B_{3/4}(x_{1})$, we define the test function 
\begin{align*}
    \psi(x) = C_3 \phi(x-y_V) + P_1(x_1).
\end{align*}
Let $x_T \in \overline{B}_{5/4}(y_V)$ be a touching point such that
\begin{align*}
    (u-\psi)(x_T)=\min_{\overline{B}_{5/4}(y_V)}(u-\psi).
\end{align*}
Then for large $C_3 = C_3(p,\gamma) > 0$,
\begin{align} \label{Case3min}
    \nonumber (u-\psi)|_{\partial B_{5/4}(y_V)} &\geq (P_1-\psi)|_{\partial B_{5/4}(y_V)} \\
    \nonumber &\geq \min_{\partial B_{5/4}(y_V)} (P_1 -P_1(x_1)) +\frac{C_3}{p} \left( \left(\frac{4}{3}\right)^p -\left(\frac{4}{5}\right)^p \right) \\
    \nonumber &\geq -\osc_{B_{7/2}(y_1)}P_1 +\frac{C_3}{p}\left( \left(\frac{4}{3}\right)^p -\left(\frac{4}{5}\right)^p \right) \\
    &\geq -\frac{1}{1+\alpha}\left(\frac{7}{2}\right)^{1+\alpha}  +\frac{C_3}{p}\left( \left(\frac{4}{3}\right)^p -\left(\frac{4}{5}\right)^p \right) >0, \\
    (u-\psi)(x_T) &\leq (u-\psi)(x_1) = -\nonumber C_3\phi(x_1-y_V) <0,
\end{align}
since $B_{5/4}(y_V) \subset B_{7/2}(y_1)$.
Therefore, we get $x_T \in B_{5/4}(y_V)$.

\textbf{Claim}: For large enough $C_3=C_3(n,\lambda,\Lambda,\gamma)$ and small enough $\epsilon_2=\epsilon_2(n,\lambda,\Lambda,\gamma,\kappa)$, there exists a vertex point $y^{0}_V \in B_{1/4} \cap B_{3/4}(x_{1})$ such that the corresponding touching point is in $x^{0}_T \in B_{1/4}(y^0_V)$.

Suppose by contradiction that for all $y_V \in B_{1/4} \cap B_{3/4}(x_{1})$, the touching point is always in $x_T \in B_{5/4}(y_V) \setminus B_{1/4}(y_V)$. Then,
\begin{align*}
    \frac{1}{4} <|x_T-y_V| < \frac{5}{4}, \quad |x_T| < \frac{3}{2}.
\end{align*}
We define $T$ by the set of corresponding touching points $x_T$, then $T \subset B_2$.

The proof of \textbf{Claim} is divided into four minor steps.

\textbf{Step 1-1.}
From \eqref{Case1vis1}, we see that
$$ \psi + \min_{\overline{B}_{5/4}(y_V)}(u-\psi) \overset{x_T}{\leq} u \quad \text{ in } \overline{B}_{5/4}(y_V). $$
By the definition of viscosity solution, we obtain
\begin{align*}
    |D\psi(x_T)|^{\gamma} \mathcal{P}_{\lambda,\Lambda}^{-}(D^{2}\psi(x_T)) \leq f(x_T).
\end{align*}
A direct computation gives
\begin{align} \label{Case3Dpsi}
    |D\psi(x_T)| &= \left|\frac{C_3}{|x_T-y_V|^{p+1}} \frac{x_T-y_V}{|x_T-y_V|}\right| \geq C_3  \left(\frac{4}{5}\right)^{p+1} \ \geq 1,
\end{align}
where $C_3 = C_3(p)$ is large enough.
Also, we have
\begin{align*}
\mathcal{P}^{-}(D^{2}\psi(x_T)) &\geq C_3{P}^{-}(D^{2}\phi(x_T-y_V)) \\
 &= \frac{C_3}{|x_T-y_V|^{p+2}}((p+1)\lambda - (n-1)\Lambda) \\
 &\geq C_3 \left(\frac{4}{5}\right)^{p+2}((p+1)\lambda - (n-1)\Lambda) \ \geq 1
\end{align*}
for large enough $p=p(n,\lambda,\Lambda)$.
Hence, we have
\begin{align*}
     1 \leq |D\psi(x_T)|^{\gamma} \mathcal{P}_{\lambda,\Lambda}^{-}(D^{2}\psi(x_T)) \leq f(x_T), \quad \forall x_T \in T.
\end{align*}

\textbf{Step 1-2.} For any $x_T \in T$, we have
\begin{align*}
    Du(x_T)= C_3 D\phi(x_T-y_V), \\
    D^{2}u(x_T) \geq C_3 D^{2}\phi(x_T-y_V).
\end{align*}
Thus, we obtain
\begin{align*}
    y_V = x_T - (D\phi)^{-1}( C_3^{-1}Du(x_T)).
\end{align*}
By differentiating with respect to $x_T$ and using the inverse function theorem, we have
\begin{align*}
    D_{x_T}y_V = \frac{C_3^{\frac{1}{p+1}}}{{|Du|^{1+\frac{1}{p+1}}}}\left(I-\left(\frac{1}{p+1} + 1\right) \frac{Du}{|Du|} \otimes \frac{Du}{|Du|}\right)(D^{2}u(x_T)-C_3 D^{2}\phi(x_T-y_V)).
\end{align*}
Consequently, we have
\begin{align*}
    \det D_{x_T}y_V = -\frac{1}{p+1}\left(\frac{C_3^{\frac{1}{p+1}}}{|Dv|^{1+\frac{1}{p+1}}}\right)^{n} \det(D^{2}u(x_T)- C_3 D^{2}\phi(x_T-y_V)).
\end{align*}

\textbf{Step 1-3.} We want to find a bound for $\det(D^{2}u(x_T)-C_3 D^{2}\phi(x_T-y_V))$ by using \eqref{subPDE}. Note that
\begin{align*}
    |Du|^{\gamma} \mathcal{P}^{-}(D^{2}u - D^{2}\psi) +|Du|^{\gamma} \mathcal{P}^{-}(D^{2}\psi) \leq |Du|^{\gamma} \mathcal{P}^{-}(D^{2}u) \leq f .
\end{align*}
From \textbf{Step 1-1}, we have $Du(x_T) = D\psi(x_T)$, $|D\psi(x_T)|^{\gamma} \mathcal{P}^{-}(D^{2}\psi(x_T)) \geq 1$. Thus,
\begin{align*}
    |Du(x_T)|^{\gamma} \mathcal{P}^{-}(D^{2}u - D^{2}\psi)(x_T) \leq f(x_T).
\end{align*}
Since $D^{2}u(x_T) - D^{2}\psi(x_T) = D^{2}u(x_T)-C_3 D^{2}\phi(x_T-y_V) \geq 0$, we get
\begin{align*}
    & \lambda \tr(D^{2}u(x_T) - D^{2}\psi(x_T)) = \mathcal{P}^{-}(D^{2}u(x_T) - D^{2}\psi(x_T)) \leq \frac{f(x_T)}{|Du(x_T)|^{\gamma}},\\
    & \det(D^{2}u(x_T)-C_3 D^{2}\phi(x_T-y_V)) \leq\left(\frac{\tr(D^{2}u(x_T) - D^{2}\psi(x_T))}{n}\right)^{n} \leq\left(\frac{f(x_T)}{\lambda|Du(x_T)|^{\gamma}}\right)^{n}.
\end{align*}
From \eqref{Case3Dpsi}, we also have $ |Du(x_T)| = |D\psi(x_T)| >1$. Thus, we deduce that
\begin{align*}
    |\det D_{x_T}y_V| &\leq C \left|\frac{f(x_T)}{|Du(x_T)|^{1+\frac{1}{p+1}}|Du(x_T)|^{\gamma}}\right|^{n}\\
    &\leq C|f(x_T)|^{n}.
\end{align*}

\textbf{Step 1-4.} Consider $m : T \rightarrow B_{1/4} \cap B_{3/4}(x_{1})$ defined by $m(x_T) =y_V$. Then by area formula,
\begin{align*}
    |B_{1/4} \cap B_{3/4}(x_{1})| \leq \int_{T}|\det D_{x_T}y_V| dx_T \leq C \int_{T} |f(x_T)|^{n} dx_T.
\end{align*}
Since $B_{1/4} \cap B_{3/4}(x_{1})$ contains a ball of radius $\kappa/2$ and $T \subset B_2 \subset B_3(x_1)$, we can conclude that
\begin{align*}
    \left( \frac{\kappa}{2} \right)^{n}|B_1| &\leq C \int_{B_3(x_1)} |f(x_T)|^{n} dx_T\\
    &\leq C\mathcal{M}(|f|^n)(x_1)|B_3(x_1)| \\
    &\leq C\epsilon_2 3^n |B_1|,
\end{align*}
which makes contradiction for small enough $\epsilon_{2} < 6^{-n}C^{-1}\kappa^n$. This proves the \textbf{Claim}.

Therefore, we know that there exist $x_2 = x^{0}_{T} \in B_{1/4}(y^{0}_V) \subset B_{1/2}$ such that $u \overset{x_2}{\geq} \psi + C'$ in $\overline{B}_{5/4}(y^{0}_V)$, where $C' =\min_{\overline{B}_{5/4}(y_V)}(u-\psi) <0$ by \eqref{Case3min}.
Since $\psi$ is smooth, there exist a $C^{1,\alpha}$ cone $P_{\tilde{M}_3}$ with opening $\tilde{M}_3>1$ depending only on smoothness of $\psi$ and a vertex $y_2$ such that
\begin{align*}
    u(x) \overset{x_2}{\geq} \psi(x) + C' \overset{x_2}{\geq} P_{\tilde{M}_3}(x) := -\frac{\tilde{M}_3}{1+\alpha}|x-y_2|^{1+\alpha} + C \quad \text{ in } \overline{B}_{5/4}(y^{0}_V). 
\end{align*}
We claim that the vertex of $P_{\tilde{M}_3}$ is in $y_2 \in B_{1/4}(y^{0}_V) \subset B_{1/2} \subset \Omega$.
Since $\psi$ is radially symmetric at center $y^{0}_V$, $P_{\tilde{M}_3}$ is radially symmetric at center $y_2$, and $D\psi(x_2) = DP_{\tilde{M}_3}(x_2)$, we know that $y_2$ is on the line $\overleftrightarrow{y^{0}_V x_2}$.
We claim that $y_2$ is on the line segment $\overline{y^{0}_V x_2} \subset B_{1/4}(y^{0}_V)$.

If $y_2$ is on the ray starting at $x_2$ opposite to $y^{0}_V$, this makes a contradiction since the direction of $D\psi(x_2)$ is $y^{0}_V - x_2$ but $DP_{\tilde{M}_3}(x_2)$ is $y_2 -x_2$, which is opposite direction.
If $y_2$ is on the ray starting at $y^{0}_V$ opposite to $x_2$, Observe that $|y_2 -x_2| > |y^{0}_V -x_2|$, thus for a point $x_3$ such that $\frac{1}{2}(x_2 + x_3) = y_2$, we have $|y^{0}_V-x_3| > |y^{0}_V-x_2|$. However, $P_{\tilde{M}_3}$ and $\psi$ is radically symmetric and decreasing, $P_{\tilde{M}_3}(x_3) = P_{\tilde{M}_3}(x_2) = \psi(x_2) + C' > \psi(x_3) + C'$ which contradicts with $\psi + C' \geq P_{\tilde{M}_3}$.

Therefore, we find a $C^{1,\alpha}$ cone $P_{\tilde{M}_3}$ which almost satisfies the condition of \textbf{Step 1}, but we know $P_{\tilde{M}_3} \overset{x_2}{\leq} u$ only in $\overline{B}_{5/4}(y^{0}_V)$, not in $\Omega$.
Note that from \eqref{Case3min},
\begin{align*}
    &P_{\tilde{M}_3} \leq \psi+C' < \psi < P_1 \quad \text{ on } \, \partial B_{5/4}(y_V), \\
    &P_{\tilde{M}_3}(x_2) = u(x_2) \geq P_1(x_2).
\end{align*}
Define $Q(x) := P_{\tilde{M}_3}(x) - P_1(x)$. Then $Q(x) < 0$ on $\partial B_{5/4}(y_V)$ and $Q(x_2) \geq 0$.
Thus, there exists a local maximum point $y \in B_{5/4}(y_V)$ such that $DQ(y) = 0$.
Since $Q$ satisfies \eqref{cone2}, we conclude from \eqref{cone} that $y$ is the vertex of $Q$ and $\max_{\mathbb{R}^{n} \setminus B_{5/4}(y_V)} Q = \max_{\partial B_{5/4}(y_V)} Q <0$, which implies $P_{\tilde{M}_3} \leq P_1 \leq u$ in $\Omega$ and \textbf{Step 1} is finished.

\textbf{Step 2.} As in \textbf{Case 2}, we want to find a universal $M_3 = M_3(n,\lambda,\Lambda,\gamma)>\tilde{M}_3$ and a closed ball $V_3 \subset \Omega$ such that $T^{-}_{M_3}(V_3) \subset B_1$.
This can be done as in \textbf{Case 2} with $P_1$ replaced by $P_{\tilde{M}_3}$. 
More precisely, for each $\tilde{x} \in T^{-}_{M_3}(V_3)$, there exists $\tilde{y} \in V_3$ such that
\begin{align*}
    P_{M_3}(x) = -\frac{M_3}{1+\alpha}|x-\tilde{y}|^{1+\alpha} + C \overset{\tilde{x}}{\leq} u \quad \text{ in } \Omega.
\end{align*}
Consider a cone-like $Q$ defined by
\begin{align*}
    Q(x)& = P_{M_3}(x)- P_{\tilde{M}_3}(x)\\
    &= -\frac{M_3}{1+\alpha}|x-\tilde{y}|^{1+\alpha} +\frac{\tilde{M}_3}{1+\alpha}|x-y_1|^{1+\alpha} +C',
\end{align*}
where $C'$ is a constant. Then we have $Q(x_2) \leq Q(\tilde{x})$.
We set $V_3 \subset \Omega$ as
$$V_3 = \overline{B}_{\frac{1}{8}\frac{M_3^{1/\alpha}-\tilde{M}_3^{1/\alpha}}{M_3^{1/\alpha}}} \left(\frac{\tilde{M}_3^{1/\alpha}}{M_3^{1/\alpha}}y_1 + \frac{M_3^{1/\alpha}-\tilde{M}_3^{1/\alpha}}{M_3^{1/\alpha}}x_2 \right).$$
Then the vertex of $Q$ is $y_0 \in B_{1/8}(x_2)$ if $\tilde{y} \in V_3.$
With a proper translation, we assume that $DQ(0)=0$.
Then one can prove that $a_{Q} = \sup\{ |x| : Q(x) \geq \min_{B_{1/8}} Q\} \leq 1/4$ by choosing $M_3 \gg \tilde{M}_3$ so large that $Q$ can be approximated by a $C^{1,\alpha}$ cone as in \textbf{Case 2}.

\textbf{Step 3.} So far, we have proved that for all three cases $i \in \{1,2,3\}$, there exist a universal constant $M_i>1$ and a vertex subset $V_i \subset \Omega$ such that $T^{-}_{M_i}(V_i) \subset B_1$.
We next assert that
\begin{equation} \label{Step3_Claim}
    |V_i| \leq C|T^{-}_{M_i}(V_i)|
\end{equation}
for some constant $C=C(n,\lambda,\Lambda)>0$.

For simplicity, we write $M,V$ instead of $M_i, V_i$. For each $x \in T^{-}_{M}(V)$, there exists $y\in V$ such that
\begin{align*}
    P(z-y) = -\frac{M}{1+\alpha}|z-y|^{1+\alpha} + C \overset{x}{\leq} u(z) \quad \text{ in } \Omega.
\end{align*}
Therefore, we know that
\begin{align*}
    Du(x) = DP(x-y), \quad D^2u(x) \geq D^2P(x-y).
\end{align*}
Since $(DP)^{-1}(z) = -\frac{1}{M^{1+\gamma}} |z|^\gamma z$, the mapping $m : T^{-}_{M}(V) \ni x \mapsto y \in V$ is precisely given by
\begin{equation} \label{Step3xy}
    y = x - (DP)^{-1}(Du(x)) = x + \frac{1}{M^{1+\gamma}}|Du|^\gamma Du(x).
\end{equation}
Now, we want to prove that $Du$ is Lipschitz in $T^{-}_{M}(V)$ so that the mapping $m : x\mapsto y$ is also Lipschitz.
Unlike \textbf{Step 1}, we are going to consider $T^{-,\epsilon}_{M}(V)$ for every small $\epsilon>0$, instead of $T^{-}_{M}(V)$, where
\begin{equation*}
    T^{-,\epsilon}_{M}(V) := \{ x \in T^{-}_{M}(V) : |x-y| > \epsilon, \text{ where } y \text{ is the corresponding vertex point of }x \}.
\end{equation*}
We also define
\begin{align*}
    T^{-,0+}_{M}(V) &:= \{ x \in T^{-}_{M}(V) : |x-y| > 0, \text{ where } y \text{ is the corresponding vertex point of }x \},\\
    T^{-,0}_{M}(V) &:= \{ x \in T^{-}_{M}(V) : x=y, \text{ where } y \text{ is the corresponding vertex point of }x \},
\end{align*}
and $V^{\epsilon} := m(T^{-,\epsilon}_{M}(V))$, $V^{0+} := m(T^{-,0+}_{M}(V))$, $V^{0} := m(T^{-,0}_{M}(V))$.

It is clear that for any $\epsilon_1 > \epsilon_2 > 0$, 
\begin{align*}
    T^{-,\epsilon_1}_{M}(V) \subset T^{-,\epsilon_2}_{M}(V) \qquad \text{and} \qquad V^{\epsilon_1} \subset V^{\epsilon_2}.
\end{align*}
Also,
\begin{align*}
    T^{-,0+}_{M}(V)&=\bigcup_{\epsilon>0}T^{-,\epsilon}_{M}(V), \quad T^{-}_{M}(V) = T^{-,0}_{M}(V) \sqcup T^{-,0+}_{M}(V),\\
   V^{0+}&= \bigcup_{\epsilon>0}V^{\epsilon}, \quad V = V^{0} \cup V^{0+}.
\end{align*}
For each $x \in T^{-,\epsilon}_{M}(V)$, there exists $y \in V^{\epsilon}$ such that $|x-y|>\epsilon$ and
\begin{align*}
    P(z-y) = -\frac{M}{1+\alpha}|z-y|^{1+\alpha} + C \overset{x}{\leq} u(z) \quad \text{ in } \Omega.
\end{align*}
We consider a $C^2$-function $P^{\epsilon}$, constructed by mollifying $P$ near the vertex $0$, with
\begin{align*}
    & P^{\epsilon}(z) = P(z) \quad \text{ if } z \notin B_\epsilon,\\
    & P^{\epsilon}(z) \leq P(z) \quad \text{and} \quad |D^2P^\epsilon(z)| \leq CM\epsilon^{-1} \quad \text{ if } z \in B_\epsilon.
\end{align*}
Since $x\notin B_\epsilon(y)$, we obtain
\begin{align*}
    P^{\epsilon}(z-y) \overset{x}{\leq} u(z) \quad \text{ in } \Omega.
\end{align*}
Thus, for any $z \in \Omega$ and $x \in T^{-,\epsilon}_{M}(V)$, we get
\begin{align*}
    u(z) \geq P^{\epsilon}(z-y) &\geq P^{\epsilon}(x-y) + \langle DP^{\epsilon}(x-y),z-x \rangle - CM\epsilon^{-1}|z-x|^2\\
    &=u(x) + \langle Du(x),z-x \rangle - CM\epsilon^{-1}|z-x|^2
\end{align*}
since $|D^2P(z)| \leq CM\epsilon^{-1}$ and $Du(x)=DP^{\epsilon}(x-y)$.
For any $x_1, x_2 \in T^{-,\epsilon}_{M}(V)$, we set $r:=2|x_1-x_2|$.
For any $z \in B_r(x_1)$, we have
\begin{align*}
    u(z) & \geq u(x_1) + \langle Du(x_1),z-x_1 \rangle - CM\epsilon^{-1}|z-x_1|^2 \\
    & \geq u(x_2) + \langle Du(x_2),x_1-x_2 \rangle - CM\epsilon^{-1}|x_1-x_2|^2 + \langle Du(x_1),z-x_1 \rangle - CM\epsilon^{-1}|z-x_1|^2.
\end{align*}
By the semi-concavity of $u$, there is a universal constant $\tilde{C}>0$ such that 
\begin{align*}
    u(z) \leq u(x_2) + \langle Du(x_2),z-x_2 \rangle + \tilde{C}|z-x_2|^2.
\end{align*}
Therefore, for any $z \in B_r(x_1)$, we have
\begin{align*}
    \langle Du(x_1)-Du(x_2),z-x_1 \rangle \leq C(\tilde{C} + M\epsilon^{-1})r^2.
\end{align*}
By taking $z$ such that $z-x$ is parallel to $Du(x_1)-Du(x_2)$ and $|z-x| = r/2$, we conclude that
\begin{align*}
    |Du(x_1)-Du(x_2)|\leq C(\tilde{C} + M\epsilon^{-1})|x_1-x_2|,
\end{align*}
which implies that $Du$ is Lipschitz on $T^{-,\epsilon}_{M}(V)$.
By Rademacher theorem, $u$ is second order differentiable almost everywhere on $T^{-,\epsilon}_{M}(V)$.
Note that for $x \in T^{-,\epsilon}_{M}(V)$, we get $M^{-1-\gamma}|Du(x)|^{1+\gamma} = |y-x|>0$, which implies $|Du(x)|>0$. 

We differentiate \eqref{Step3xy} with respect to $x$ to find
\begin{align*}
    D_x y &= I + \frac{1}{M^{1+\gamma}} D(|Du|^\gamma Du)\\
    &= I + \frac{1}{M^{1+\gamma}}|Du|^\gamma\left(I+\gamma \frac{Du}{|Du|} \otimes \frac{Du}{|Du|}\right)D^2 u(x).
\end{align*}
Note that $\frac{1}{M^{1+\gamma}}|DP|^\gamma DP(z) =-z, \ \forall z \in \mathbb{R}^n$. Therefore,
\begin{align*}
     \frac{1}{M^{1+\gamma}}D(|DP|^\gamma DP)(x-y)= \frac{1}{M^{1+\gamma}}|DP|^\gamma\left(I+\gamma \frac{DP}{|DP|} \otimes \frac{DP}{|DP|}\right)D^2 P(x-y) = -I.
\end{align*}
Since $Du(x) = DP(x-y)$, we conclude that
\begin{align*}
    D_x y = \frac{1}{M^{1+\gamma}}|Du|^\gamma\left(I+\gamma \frac{Du}{|Du|} \otimes \frac{Du}{|Du|}\right)(D^2 u(x) - D^2 P(x-y)).
\end{align*}
Since $\det\left(I+\gamma \frac{Du}{|Du|} \otimes \frac{Du}{|Du|}\right) = 1+\gamma$, we obtain
\begin{align*}
    \det(D_x y) = (1+\gamma)\det\left(\frac{1}{M^{1+\gamma}}|Du|^\gamma(D^2 u(x) - D^2 P(x-y))\right) =: (1+\gamma)\det(A).
\end{align*}
Observe that
\begin{align*}
    D^2 u(x) &\geq D^2 P(x-y) = -\frac{M}{|x-y|^{1-\alpha}}\left(I+(1-\alpha)\frac{x-y}{|x-y|} \otimes \frac{x-y}{|x-y|}\right).
\end{align*}
Using \eqref{Step3xy}, we get
\begin{align*}
    D^2 P(x-y) = -\frac{M^{1+\gamma}}{|Du|^\gamma}\left(I+\frac{\gamma}{1+\gamma}\frac{Du}{|Du|} \otimes \frac{Du}{|Du|}\right).
\end{align*}
Therefore,
\begin{align*}
    A &= \frac{1}{M^{1+\gamma}}|Du|^\gamma(D^2 u(x) - D^2 P(x-y))\\
    &= \frac{1}{M^{1+\gamma}}|Du|^\gamma D^2 u + \left(I+\frac{\gamma}{1+\gamma}\frac{Du}{|Du|} \otimes \frac{Du}{|Du|}\right).
\end{align*}
Note that $A \geq 0$ since $D^2 u(x) \geq D^2 P(x-y)$. Thus,
\begin{align*}
    \lambda \tr A &= \mathcal{P}^{-}(A) \\
    &\leq \frac{1}{M^{1+\gamma}}|Du|^\gamma \mathcal{P}^{-}(D^{2}u(x)) + \mathcal{P}^{+}\left(I+\frac{\gamma}{1+\gamma}\frac{Du}{|Du|} \otimes \frac{Du}{|Du|}\right) \\
    &\leq \frac{1}{M^{1+\gamma}} f(x) + \Lambda \left(n+\frac{\gamma}{1+\gamma}\right),\\
    \det A &\leq \left(\frac{\tr A}{n} \right)^n \leq C\left(1 + \frac{|f(x)|^n}{M^{(1+\gamma)n}} \right).
\end{align*}
Hence, we have
\begin{align*}
     0 \leq \det(D_x y) \leq C\left(1 + \frac{|f(x)|^n}{M^{(1+\gamma)n}}\right).
\end{align*}
Consequently, by applying the area formula to the mapping $m : T^{-,\epsilon}_{M}(V) \rightarrow V^{\epsilon}$, we conclude that
\begin{align*}
     |V^{\epsilon}| \leq \int_{T^{-,\epsilon}_{M}(V)} \det(D_x y) \, dx \leq C|T^{-,\epsilon}_{M}(V)| + \frac {C}{M^{(1+\gamma)n}} \int_{T^{-,\epsilon}_{M}(V)} |f(x)|^n \, dx.
\end{align*}

Since we proved at \textbf{Step 2} that $T^{-,\epsilon}_{M}(V) \subset T^{-}_{M}(V) \subset B_1 \subset B_2(x_1)$, we obtain
\begin{align*}
     |V^{\epsilon}| &\leq C|T^{-,\epsilon}_{M}(V)| + \frac {C}{M^{(1+\gamma)n}} \int_{B_2(x_1)} |f(x)|^n \, dx \\
     &\leq C|T^{-,\epsilon}_{M}(V)| + \frac {C}{M^{(1+\gamma)n}} \mathcal{M}(|f|^n)(x_1) \\
     &\leq C|T^{-,\epsilon}_{M}(V)| + C\epsilon_2,
\end{align*}
where $C$ is independent of $\epsilon$. By the continuity of the Lebesgue measure, letting $\epsilon \rightarrow 0$ gives
\begin{align*}
      |V^{0+}|= \lim_{\epsilon \rightarrow 0}|V^{\epsilon}| \leq C\lim_{\epsilon \rightarrow 0}|T^{-,\epsilon}_{M}(V)| + C\epsilon_2  = C|T^{-,0+}_{M}(V)| + C\epsilon_2.
\end{align*}
Recalling that $V^{0}=T^{-,0}_{M}(V)$, we have
\begin{align*}
      |V| \leq |V^{0}|+|V^{0+}| \leq |T^{-,0}_{M}(V)| + C|T^{-,0+}_{M}(V)| + C\epsilon_2 \leq C|T^{-}_{M}(V)| + C\epsilon_2.
\end{align*}
Choosing $\epsilon_2>0$ so small that $C\epsilon_2 \leq \frac{1}{2}\min\{|V_1|,|V_2|,|V_3|\} \leq \frac{1}{2}|V|$, we get
\begin{align*}
    \frac{1}{2}|V| \leq C|T^{-}_{M}(V)|,
\end{align*}
which proves \eqref{Step3_Claim}.

Then we finally conclude that
\begin{align*}
    |B_1 \cap T^{-}_{M}| \geq |B_1 \cap T^{-}_{M_i}| \geq |T^{-}_{M_i}(V_i)| \geq \frac{1}{C}|V_i| \geq \frac{1}{C}\min\{|V_1|,|V_2|,|V_3|\}|B_1| = \Theta |B_1|,
\end{align*}
which proves the lemma by taking $M = \max\{M_1,M_2,M_3\}$, $\Theta = \frac{1}{C}\min\{|V_1|,|V_2|,|V_3|\}$ and $\theta = \frac{1}{2}\min(\theta_0,\Theta) <1$.
\end{proof}

\begin{lem} \label{key2}
There is a universal constant $\theta_0=\theta_0(n,\lambda, \Lambda)>0$ so that for any $\epsilon \in (0,1)$, there exists $\epsilon_{1} = \epsilon_{1}(n,\lambda, \Lambda, \epsilon)>0$ such that if $u \in C(B_2)$ satisfies 
\begin{align*}
|Du|^{\gamma} \mathcal{P}_{\lambda,\Lambda}^{-}(D^{2}u) \leq f \quad\text{in} \ B_{2}
\end{align*}
in the viscosity sense with $\norm{f}_{L^{n}(B_{2})} \leq \epsilon_{1}$ and $\osc_{B_2} u \leq 1/8$, then 
$$|B_{1} \cap T^{-}_{1} \cap \{x \in B_1 : \mathcal{M}(|f|^n)(x) \leq \epsilon\}| \geq \theta_0 |B_{1}|.$$
\end{lem}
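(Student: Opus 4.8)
The plan is to derive the density bound from two ingredients: a lower bound on $|B_1\cap T^-_1|$ coming from an ABP/area-formula estimate with opening-$1$ cones, and the weak-$(1,1)$ bound for $\mathcal{M}$ to remove the set where $\mathcal{M}(|f|^n)$ is large. Throughout $\alpha=\tfrac1{1+\gamma}$, and the constants are allowed to depend on $n,\lambda,\Lambda$ (and $\gamma$).

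\emph{Step 1: a lower bound $|B_1\cap T^-_1|\ge c_0|B_1|$.} As in the proof of Lemma~\ref{Key} we may assume $u$ is semiconcave, regularizing by inf-convolution if necessary (this perturbs the datum negligibly in $L^n$ and degrades $\osc u\le1/8$ only by $o(1)$, which is harmless because of the slack below). For each vertex $y\in\overline{B}_{1/4}$, slide the concave cone $P_t(x):=-\tfrac1{1+\alpha}|x-y|^{1+\alpha}+t$ upward until it first touches the graph of $u$, at some contact point $x$ and first touching height $t$. Since $u\ge P_t$ with $u(x)=P_t(x)$ and $P_t(y)=t$,
\begin{align*}
\tfrac1{1+\alpha}|x-y|^{1+\alpha}=t-u(x)\le u(y)-u(x)\le\osc_{\overline{B}_2}u\le\tfrac18,
\end{align*}
so $|x-y|\le\tfrac12$ and $|x|\le\tfrac34$; thus every such contact is interior and $x\in B_1\cap T^-_1$. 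Now copy the area-formula argument of Step~3 of the proof of Lemma~\ref{Key}, with opening $M=1$ and vertex set $V=\overline{B}_{1/4}$: $Du$ is Lipschitz on the truncated contact sets $T^{-,\tau}_1(V)$, so $u$ is twice differentiable a.e.\ there and the subsolution inequality gives $|Du(x)|^\gamma\mathcal{P}^-(D^2u(x))\le f(x)$ at a.e.\ contact point, while the vertex map $x\mapsto y=x+|Du|^\gamma Du(x)$ satisfies, by the same matrix identity and trace/determinant estimate as there,
\begin{align*}
0\le\det D_xy\le C\bigl(1+|f(x)|^n\bigr).
\end{align*}
Splitting off the (identity) piece where $x$ is its own vertex and letting $\tau\to0$, the area formula together with $T^-_1(V)\subset B_1\subset B_2$ yields $|B_{1/4}|=|V|\le C|B_1\cap T^-_1|+C\int_{B_2}|f|^n\,dx\le C|B_1\cap T^-_1|+C\epsilon_1^n$; hence there is $\epsilon_1^*=\epsilon_1^*(n,\lambda,\Lambda)$ such that $\epsilon_1\le\epsilon_1^*$ forces $|B_1\cap T^-_1|\ge c_0|B_1|$ with $c_0=c_0(n,\lambda,\Lambda)>0$.

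\emph{Step 2: removing the bad set.} By the weak-$(1,1)$ estimate for $\mathcal{M}$,
\begin{align*}
\bigl|\{x\in B_1:\mathcal{M}(|f|^n)(x)>\epsilon\}\bigr|\le C\epsilon^{-1}\norm{f}_{L^n(B_2)}^n\le C\epsilon^{-1}\epsilon_1^n,
\end{align*}
so $\bigl|B_1\cap T^-_1\cap\{\mathcal{M}(|f|^n)\le\epsilon\}\bigr|\ge c_0|B_1|-C\epsilon^{-1}\epsilon_1^n$. Choosing $\epsilon_1=\epsilon_1(n,\lambda,\Lambda,\epsilon)$ so small that $\epsilon_1\le\epsilon_1^*$ and $C\epsilon^{-1}\epsilon_1^n\le\tfrac12c_0|B_1|$ proves the lemma with $\theta_0:=\tfrac12c_0$.

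\emph{Main obstacle.} All the real work is in Step~1, and even there the technical heart — Lipschitz regularity of $Du$ on the contact sets, validity of the pointwise equation, the matrix identity for $D_xy$ and the determinant bound — is precisely what Step~3 of the proof of Lemma~\ref{Key} establishes for arbitrary opening $M$; the only genuinely new input is the elementary observation, via $\osc u\le1/8$, that opening-$1$ cones with vertices in $\overline{B}_{1/4}$ touch $u$ only inside $B_1$. The delicate point is keeping track of the constants: one must check that $c_0$, and hence $\theta_0$, does not deteriorate as $\epsilon\to0$ — which it does not, since only $\epsilon_1$ shrinks with $\epsilon$ — and follow how $\epsilon_1^*$ and $c_0$ depend on the data.
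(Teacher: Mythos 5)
Your proposal is correct and follows essentially the same route as the paper: use the oscillation bound to confine the contact points of opening-$1$ cones with vertices in a fixed small ball to the interior of $B_1$, invoke the area-formula/determinant machinery from Step 3 of Lemma \ref{Key} to get $|V|\le C|B_1\cap T^-_1|+C\epsilon_1^n$, and then discard the set where $\mathcal{M}(|f|^n)>\epsilon$ via the weak-$(1,1)$ bound. The only cosmetic difference is your choice of vertex set $\overline{B}_{1/4}$ in place of the paper's $\overline{B}_{1/2}$.
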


\begin{proof}
For each $\tilde{x} \in T^{-}_{1}(\overline{B_{1/2}})$, there exists $\tilde{y} \in \overline{B_{1/2}}$ such that
\begin{align*}
    -\frac{1}{1+\alpha}|x-\tilde{y}|^{1+\alpha} + C(\tilde{y}) \overset{\tilde{x}}{\leq} u(x) \quad \text{in } B_2.
\end{align*}
Note that $C(\tilde{y}) \leq u(\tilde{y})$ and $-\frac{1}{1+\alpha}|\tilde{x}-\tilde{y}|^{1+\alpha} + C(\tilde{y}) =u(\tilde{x})$. Hence, we have
\begin{align*}
    \frac{1}{1+\alpha}|\tilde{x}-\tilde{y}|^{1+\alpha} \leq u(\tilde{y}) - u(\tilde{x}) \leq \osc_{B_2} u \leq \frac{1}{8}.
\end{align*}
Therefore, $|\tilde{x} - \tilde{y}| \leq 1/2$, thus $\tilde{x} \in B_1$ and 
\begin{align*}
    T^{-}_{1}(\overline{B_{1/2}}) \subset B_1.
\end{align*}
Recall that in the proof of Lemma \ref{Key}, the mapping $T^{-}_{1}(\overline{B_{1/2}}) \ni \tilde{x} \mapsto \tilde{y} \in \overline{B_{1/2}}$ is precisely given by
\begin{align*}
    \tilde{y} = \tilde{x} + |Du|^\gamma Du(\tilde{x}).
\end{align*}
As Step 3 in the proof of Lemma \ref{Key}, we can obtain that
\begin{align*}
    0\leq \det(D_{\tilde{x}}\tilde{y}) \leq C(1 + |f(x)|^n).
\end{align*}
Since $T^{-}_{1}(\overline{B_{1/2}}) \subset B_1$, it follows from the area formula that
\begin{align*}
    2^{-n}|B_1|=|B_{1/2}| \leq C_0|T_{1}(\overline{B_{1/2}})| + C_0\int_{B_1}|f|^n \, dx
\end{align*}
for some constant $C_0>1$.
Let $\epsilon_1^n \leq 2^{-(n+1)}|B_1|C_0^{-1}$. Then we have
\begin{align*}
     |B_1 \cap T^{-}_1| \geq |T^{-}_{1}(\overline{B_{1/2}})| \geq 2^{-(n+1)}C_0^{-1}|B_1| =: 2\theta_0|B_1|.
\end{align*}
By the weak type $(1,1)$ property, we obtain
\begin{align*}
    |\{x \in B_1 : \mathcal{M}(|f|^n)(x) > \epsilon\}| \leq C_1 \epsilon^{-1} \norm{f}^n_{L^n(B_1)} \leq C_1 \epsilon^{-1} \epsilon_1^n \leq \theta_0|B_1|
\end{align*}
by choosing $\epsilon_1>0$ so small that $\epsilon_1^n \leq \min\{2^{-(n+1)}|B_1|C_0^{-1}, \theta_0|B_1| C_1^{-1} \epsilon\}$.
Consequently, we conclude that
\begin{align*}
    |B_{1} \cap T^{-}_{1} \cap \{x \in B_1 : \mathcal{M}(|f|^n)(x) \leq \epsilon\}| &\geq |B_{1} \cap T^{-}_{1}| - |\{x \in B_1 : \mathcal{M}(|f|^n)(x) > \epsilon\}|\\
    &\geq \theta_0|B_1|,
\end{align*}
which proves the lemma.
\end{proof}

\begin{lem} \label{meas}
    Under the assumptions and conclusions of Lemma \ref{key2}, we can find universal constants $\mu_0 \in (0,1)$, $\epsilon \in (0,1)$ and $M>1$ such that
    \begin{align*}
        |B_1 \setminus F_k| \leq \mu_0|B_1 \setminus E_k|
    \end{align*}
    holds for all $k=0,1,2,...$, where
    \begin{align*}
    E_k &= B_1 \cap T_{M^k} \cap \left\{x \in B_1 : \mathcal{M}(|f|^n)(x) \leq \epsilon M^{k(1+\gamma)n} \right\}, \\
    F_k &= B_1 \cap T_{M^{k+1}}.
    \end{align*}
\end{lem}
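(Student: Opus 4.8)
\emph{Plan.} The strategy is to deduce the estimate from the Vitali-type covering Lemma~\ref{covering}, applied to $E=E_k$ and $F=F_k$ inside $B_1$. Thus we must fix the constants $\mu_0,\epsilon,M$ and then verify the three hypotheses of Lemma~\ref{covering}: (a) $E_k\subseteq F_k\subseteq B_1$; (b) $|E_k|>\theta|B_1|$ for a suitable $\theta$; and (c) for every ball $B\subseteq B_1$ with $|B\cap E_k|\ge\theta|B|$ one has $|B\cap F_k|\ge\Theta|B|$. The constants are chosen as follows: Lemma~\ref{key2} produces a universal $\theta_0$; feeding this $\theta_0$ into Lemma~\ref{Key} yields universal $M>1$, $\Theta\in(0,1)$, $\theta<\min(\theta_0,\Theta)$ and $\epsilon_2>0$; we then set $\epsilon:=\epsilon_2$ and $\mu_0:=1-(\Theta-\theta)/5^{n}\in(0,1)$. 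Since Lemma~\ref{Key} and Lemma~\ref{key2} are stated for subsolutions and one-sided touching sets, I carry out the argument with $T_{M^k},T_{M^{k+1}}$ replaced by $T^{-}_{M^k},T^{-}_{M^{k+1}}$; the version for $T^{+}$ is obtained by running the same argument for $-u$, which satisfies $|D(-u)|^{\gamma}\mathcal{P}_{\lambda,\Lambda}^{-}(D^{2}(-u))\le -f$, and the two combine at the level of complements when deriving Theorem~\ref{tangent}. As in Lemma~\ref{Key}, one first regularizes $u$ to be semi-concave by inf-convolution.

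\emph{Monotonicity and conditions (a), (b).} The key preliminary observation is that $T^{-}_{K}(u,\overline{\Omega})\subseteq T^{-}_{K'}(u,\overline{\Omega})$ whenever $1\le K\le K'$: given a concave $K$-cone $P$ with vertex $y_0$ touching $u$ from below at $x_0$, let $P'$ be the concave $K'$-cone whose vertex $z$ lies on the segment $[x_0,y_0]$ and is chosen so that $DP'(x_0)=DP(x_0)$, normalized by $P'(x_0)=u(x_0)$. Then $Q:=P'-P$ is $C^1$, tends to $-\infty$ at infinity (as $K'\ge K$), and has a unique critical point, namely $x_0$; by the argument of \textbf{Claim 2} in the proof of Lemma~\ref{Key} its maximum is attained at that vertex, so $Q\le Q(x_0)=0$, i.e.\ $P'\le P\le u$ with $P'(x_0)=u(x_0)$, giving $x_0\in T^{-}_{K'}$ (the vertex $z\in[x_0,y_0]\subseteq\overline{\Omega}$ by convexity of $\overline\Omega$). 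In particular $T^{-}_{M^k}\subseteq T^{-}_{M^{k+1}}$, which is (a). Moreover the sets $\{x\in B_1:\mathcal{M}(|f|^n)(x)\le\epsilon M^{k(1+\gamma)n}\}$ increase in $k$ and $T^{-}_{M^k}\supseteq T^{-}_1$, so $E_k\supseteq E_0=B_1\cap T^{-}_1\cap\{\mathcal{M}(|f|^n)\le\epsilon_2\}$, whence $|E_k|\ge|E_0|\ge\theta_0|B_1|>\theta|B_1|$ by Lemma~\ref{key2}; this is (b).

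\emph{The ball condition (c) by rescaling --- the main step.} Fix a ball $B=B_r(x_0)\subseteq B_1$ with $|B\cap E_k|\ge\theta|B|$. Since $|x_0|+r\le 1$ we have $B_{2r}(x_0)\subseteq B_2$, so the rescaled function $\tilde u(y):=\big(u(x_0+ry)-u(x_0)\big)/(r^{1+\alpha}M^k)$ is defined on a convex domain containing $B_2$; using $\alpha\gamma=1-\alpha$ one checks that $\tilde u$ satisfies $|D\tilde u|^{\gamma}\mathcal{P}_{\lambda,\Lambda}^{-}(D^{2}\tilde u)\le\tilde f$ there, with $\tilde f(y):=f(x_0+ry)/M^{k(1+\gamma)}$. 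Under $y\mapsto x_0+ry$, concave cones of opening $M^k$ (resp.\ $M^{k+1}$) for $u$ correspond to concave cones of opening $1$ (resp.\ $M$) for $\tilde u$, while the dilation/translation invariance of the maximal operator gives $\mathcal{M}(|\tilde f|^n)(y)=M^{-k(1+\gamma)n}\mathcal{M}(|f|^n)(x_0+ry)$; hence, with the choice $\epsilon=\epsilon_2$, the set $B\cap E_k$ is carried onto $B_1\cap T^{-}_1(\tilde u)\cap\{\mathcal{M}(|\tilde f|^n)\le\epsilon_2\}$, which therefore has measure $\ge\theta|B_1|$. Lemma~\ref{Key} now yields $|B_1\cap T^{-}_M(\tilde u)|\ge\Theta|B_1|$, and rescaling back gives $|B\cap T^{-}_{M^{k+1}}(u)|\ge\Theta|B|$, i.e.\ $|B\cap F_k|\ge\Theta|B|$. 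With (a), (b), (c) verified, Lemma~\ref{covering} gives $|B_1\setminus F_k|\le\mu_0|B_1\setminus E_k|$.

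I expect the main obstacle to be this last rescaling step: one has to make sure the rescaled inequality genuinely lives on a domain containing $B_2$ (so that Lemma~\ref{Key} is applicable), that the maximal-function sublevel sets transform exactly so that the single parameter $\epsilon=\epsilon_2$ works simultaneously at every scale $B_r(x_0)\subseteq B_1$, and that the constants $\theta,\Theta,\epsilon_2,M$ produced by Lemma~\ref{Key} are threaded consistently through Lemma~\ref{key2} and the covering lemma. The monotonicity of $T^{-}_{K}$ in the opening $K$, needed already for (a) and (b), is the secondary but essential point.
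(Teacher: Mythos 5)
Your proposal is correct and follows essentially the same route as the paper: fix $\epsilon=\epsilon_2$, $M$, $\theta$, $\Theta$ from Lemmas \ref{Key} and \ref{key2}, verify the hypotheses of the covering Lemma \ref{covering}, and check the ball condition by rescaling $\tilde u(y)=u(x_0+ry)/(r^{1+\alpha}M^k)$ using $\alpha\gamma=1-\alpha$ and the scaling invariance of the maximal function. Your explicit proof of the monotonicity $T^{-}_{K}\subseteq T^{-}_{K'}$ for $K\le K'$ (via the unique-critical-point argument of Claim 2) is a welcome addition, as the paper uses this containment implicitly when asserting $E_k\supseteq B_1\cap T^{-}_1\cap\{\mathcal{M}(|f|^n)\le\epsilon\}$.
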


\begin{proof}
We will apply Lemma \ref{covering}.
Let $\theta_0>0$ be the universal constant given in Lemma \ref{key2}.
We choose $\epsilon=\epsilon(\theta_0)$, $M=M(\theta_0)$, $\theta=\theta(\theta_0)$ and universal $\Theta$ such that $\theta <\min{(\theta_{0}, \Theta)}$ in Lemma \ref{Key}, and then choose $\epsilon_1=\epsilon_1(\epsilon)$ in Lemma \ref{key2}.

Since $E_k \supset B_{1} \cap T^{-}_{1} \cap \{x \in B_1 : \mathcal{M}(|f|^n)(x) \leq \epsilon\}$, we have $|E_k| \geq \theta_0 |B_1| > \theta |B_1|$ by Lemma \ref{key2}.

Fix $k \in \mathbb{N}_0$.
For any ball $B=B_r(x_0) \subset B_1$, we claim that if $|B \cap E_k| \geq \theta|B|$, then $|B \cap F_k| \geq \Theta|B|$.

To prove the claim, we consider the scaled function $\tilde{u} : \Omega \rightarrow \mathbb{R}$ defined by
\begin{align*}
    \tilde{u}(y) = \frac{1}{r^{1+\alpha}M^k}u(ry + x_0),
\end{align*}
where $\Omega$ is the image of $B_2$ under the transformation $x=ry+x_0$.
Note that $B_2 \subset \Omega$ since $2B \subset B_2$, and $\tilde{u}$ satisfies the following inequality
\begin{align*}
    |D\tilde{u}(y)|^{\gamma} \mathcal{P}^{-}(D^{2}\tilde{u}(y)) &= \left|\frac{1}{r^{\alpha}M^k}Du \right|^{\gamma} \mathcal{P}^{-}\left(\frac{r^{1-\alpha}}{M^k}D^{2}u\right) \\
    &=\frac{1}{M^{k(1+\gamma)}}|Du|^{\gamma} \mathcal{P}^{-}(D^{2}u) \\
    &\leq \frac{1}{M^{k(1+\gamma)}} f(ry + x_0) =: \tilde{f}(y) \quad \text{ in } B_2.
\end{align*}
Applying Lemma \ref{Key} to $\tilde{u}$ and $\tilde{f}$, we see that if $|B_1 \cap T_{M}(\tilde{u})| \geq \theta|B_1|$, then $|B_1 \cap T_{1}(\tilde{u}) \cap \{\mathcal{M}(|\tilde{f}|^n) \leq \epsilon\}| \geq \Theta|B_1|$.

To scale back, we first observe that the maximal function is scaling invariant, that is, 
$$\mathcal{M}(|f|^n)(x) = M^{k(1+\gamma)n}\mathcal{M}(|\tilde{f}|^n)(y).$$ For any $C^{1,\alpha}$ cone $P_{M^k}(x)$ with opening $M^k$, consider $\tilde{P}_1(y) = \frac{1}{r^{1+\alpha}M^k}P(ry + x_0)$ which is the scaled function of $P_{M^k}$ with the same scaling of $u$.
Note that the opening of $\tilde{P}_1$ is just $1$ which is independent of $r$.
Thus, we see that
$$P_{M^k}(z) \overset{x}{\leq} u(z) \ \text{ in } B_2 \ \Longleftrightarrow \ \tilde{P}_1(z) \overset{y}{\leq} \tilde{u}(z) \ \text{ in } \Omega. $$
Therefore, for the transformation $x=ry+x_0$ we have
\begin{align*}
    x \in B \cap E_k = B \cap T_{M^k}(u) \cap \{\mathcal{M}(|f|^n) \leq \epsilon M^{k(1+\gamma)n}\} \ &\Longleftrightarrow \ y \in B_{1} \cap T_{1}(\tilde{u}) \cap \{\mathcal{M}(|\tilde{f}|^n) \leq \epsilon\}, \\
    x \in B \cap F_k = B \cap T_{M^{k+1}}(u) \ &\Longleftrightarrow \ y \in B_{1} \cap T_{M}(\tilde{u}).
\end{align*}
Consequently, we deduce that if $|B \cap E_k| \geq \theta|B|$, then $|B \cap F_k| \geq \Theta|B|$,
We then conclude from Lemma \ref{covering} that
\begin{align*}
    |B_1 \setminus F_k| \leq \left(1-\frac{\Theta-\theta}{5^n}\right)|B_1 \setminus E_k|=:\mu_0|B_1 \setminus E_k|,
\end{align*}
which proves the lemma.
\end{proof}

\begin{cor} \label{decay}
    Under the assumptions and conclusions of Lemma \ref{key2}, we obtain
    \begin{align*}
        |B_1 \setminus T_{M^k}| \leq C \mu^k,
    \end{align*}
    where $C=C(n,\lambda,\Lambda,\gamma)>0$,  $0<\mu=\mu(n,\lambda,\Lambda,\gamma)<1$.
\end{cor}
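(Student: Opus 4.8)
The plan is to iterate the one-step measure decay of Lemma \ref{meas}. Write $d_k := |B_1 \setminus T_{M^k}|$, so that $d_0 = |B_1 \setminus T_1| \le |B_1|$. First I would rewrite the two sets appearing in Lemma \ref{meas} in terms of $d_k$. Since $F_k = B_1 \cap T_{M^{k+1}}$, we have exactly $|B_1 \setminus F_k| = d_{k+1}$; and since $E_k = B_1 \cap T_{M^k} \cap \{x\in B_1: \mathcal{M}(|f|^n)(x) \le \epsilon M^{k(1+\gamma)n}\}$, its complement splits as
$$B_1 \setminus E_k = (B_1 \setminus T_{M^k}) \cup \bigl(B_1 \cap \{x : \mathcal{M}(|f|^n)(x) > \epsilon M^{k(1+\gamma)n}\}\bigr),$$
so that $|B_1 \setminus E_k| \le d_k + |\{x\in B_1 : \mathcal{M}(|f|^n)(x) > \epsilon M^{k(1+\gamma)n}\}|$.

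Next I would estimate the last term by the weak type $(1,1)$ bound for the Hardy--Littlewood maximal operator: it does not exceed $C (\epsilon M^{k(1+\gamma)n})^{-1}\|f\|_{L^n(B_2)}^n \le C\epsilon^{-1}\epsilon_1^n\, M^{-k(1+\gamma)n} =: C_\ast\, \beta^k$, where $\beta := M^{-(1+\gamma)n} \in (0,1)$ and $C_\ast$ depends only on $n,\lambda,\Lambda,\gamma$ (recall that $\epsilon$, $M$, $\epsilon_1$ are all fixed universally in Lemmas \ref{key2} and \ref{meas}). Plugging these into Lemma \ref{meas} yields the linear recursion
$$d_{k+1} \le \mu_0\bigl(d_k + C_\ast\beta^k\bigr), \qquad k = 0,1,2,\dots,$$
with $\mu_0 \in (0,1)$ universal.

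Finally I would unwind this recursion: iterating gives $d_k \le \mu_0^k\,|B_1| + \mu_0 C_\ast \sum_{j=0}^{k-1}\mu_0^{\,k-1-j}\beta^j$. Since $\mu_0,\beta \in (0,1)$, the sum is bounded by $k\,(\max\{\mu_0,\beta\})^{k-1}$, which for any fixed $\mu \in (\max\{\mu_0,\beta\},1)$ is in turn $\le C\mu^k$; hence $d_k \le C\mu^k$ with $C>0$ and $\mu \in (0,1)$ depending only on $n,\lambda,\Lambda,\gamma$, which is the assertion. I do not expect any genuine obstacle here: the only point requiring a little care is the bookkeeping of constants when $\mu_0$ and $\beta$ happen to coincide (so that a spurious factor $k$ appears), which is absorbed by enlarging $\mu$ slightly, and this is entirely routine.
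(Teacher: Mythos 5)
Your proposal is correct and follows essentially the same route as the paper: apply Lemma \ref{meas} to obtain the recursion $\alpha_{k+1}\leq\mu_0(\alpha_k+\beta_k)$, bound $\beta_k$ by the weak type $(1,1)$ estimate for the maximal operator, and absorb the resulting factor $k$ by passing from $\mu_1=\max\{\mu_0,M^{-(1+\gamma)n}\}$ to a slightly larger $\mu<1$. No gaps.
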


\begin{proof}
Define
\begin{align*}
    \alpha_k &= |B_1 \setminus T_{M^k}|, \\
    \beta_k &= |\{x \in B_1 : \mathcal{M}(|f|^n)(x) \geq \epsilon M^{k(1+\gamma)n} \}|.
\end{align*}
Applying Lemma \ref{meas}, we get $\alpha_{k+1} \leq \mu_0(\alpha_k + \beta_k)$. Hence,
\begin{align*}
    \alpha_{k} \leq \mu_0^k|B_1| + \sum_{i=0}^{k-1} \mu_0^{k-i} \beta_i.
\end{align*}
By the weak type $(1,1)$ property for the maximal operator, we have 
\begin{align*}
    \beta_k \leq C(\epsilon_2  M^{k(1+\gamma)n} )^{-1} \norm{f}^n_{L^n(B_1)} \leq CM^{-k(1+\gamma)n}.
\end{align*}
Therefore, we obtain
\begin{align*}
    \sum_{i=0}^{k-1} \mu_0^{k-i} \beta_i &\leq C \sum_{i=0}^{k-1} \mu_0^{k-i} M^{-k(1+\gamma)ni} \\
    &\leq C \sum_{i=0}^{k-1} \mu_1^{k} \leq Ck\mu_1^k
\end{align*}
with $\mu_1 = \max(\mu_0, M^{-(1+\gamma)n}) \in (0,1).$ Consequently, we have
\begin{align*}
    \alpha_k \leq \mu_0^k|B_1| + Ck\mu_1^k \leq C\mu^k,
\end{align*}
where $\mu := \frac{1+\mu_1}{2} \in (\mu_1,1)$, for some constant $C$ depending only on $n,\lambda,\Lambda$ and $\gamma$.
This completes the proof.
\end{proof}

We now prove Theorem \ref{tangent} using Corollary \ref{decay}.
\begin{proof}
Since $u$ and $-u$ satisfy the assumptions of Lemma \ref{key2}, we get $|B_1 \setminus T^{\pm}_{M^k}| \leq C \mu^k$.
Therefore, $$|B_1 \setminus T_{M^k}| = |B_1 \setminus (T^{-}_{M^k} \cap T^{+}_{M^k})| \leq |B_1 \setminus T^{+}_{M^k}|+ |B_1 \setminus T^{-}_{M^k}| \leq C\mu^k.$$ Consequently, we obtain
\begin{align*}
    |B_1 \setminus T_{t}| \leq Ct^{-\sigma}, \quad \forall t>0,
\end{align*}
where $\sigma = -\log_M \mu > 0$, which completes the proof.
\end{proof}

\providecommand{\bysame}{\leavevmode\hbox to3em{\hrulefill}\thinspace}
\providecommand{\MR}{\relax\ifhmode\unskip\space\fi MR }
% \MRhref is called by the amsart/book/proc definition of \MR.
\providecommand{\MRhref}[2]{%
	\href{http://www.ams.org/mathscinet-getitem?mr=#1}{#2}
}
\providecommand{\href}[2]{#2}

\end{document}